\newcommand{\CC}{\mathbb{C}}
\newcommand{\RR}{\mathbb{R}}
\def\Image{\mathop{\rm Im}}
\def\diag{{\rm diag\, }}
\newtheorem{theorem}{Theorem}
\newtheorem{proposition}{Proposition}[section]
\newtheorem{lemma}[proposition]{Lemma}
\theoremstyle{definition}
\newtheorem{remark}[proposition]{Remark}
\theoremstyle{plain}
\begin{document}
\title[Bi-monotone maps with respect to minus partial order]{Bi-monotone maps  on the set of all variance-covariance matrices with respect to minus partial order}
\author[G.~Dolinar]{Gregor Dolinar}
\address[Gregor Dolinar]{University of Ljubljana, Faculty of Electrical
Engineering, Tr\v{z}a\v{s}ka cesta 25, SI-1000 Ljubljana, Slovenia, and
IMFM, Jadranska 19, SI-1000 Ljubljana, Slovenia}
\email{gregor.dolinar@fe.uni-lj.si}
\author[D.~Ili\v{s}evi\'c]{Dijana~Ili\v{s}evi\'c}
\address[Dijana Ili\v{s}evi\'c]{University of Zagreb, Faculty of Science, Department of Mathematics, Bijeni\v cka 30, 10000 Zagreb, Croatia}
\email{ilisevic@math.hr}
\author[B.~Kuzma]{Bojan Kuzma}
\address[Bojan Kuzma]{University of Primorska, Glagolja\v{s}ka 8, SI-6000
Koper, Slovenia, and IMFM, Jadranska 19, SI-1000 Ljubljana, Slovenia}
\email{bojan.kuzma@upr.si}
\author[J.~Marovt]{Janko Marovt}
\address[Janko Marovt]{University of Maribor, Faculty of Economics and
Business, Razlagova 14, SI-2000 Maribor, Slovenia, and IMFM, Jadranska 19,
SI-1000 Ljubljana, Slovenia}
\email{janko.marovt@um.si}
\keywords{monotone map, minus partial order, variance-covariance matrix, linear
model}
\subjclass[2020]{15A09, 15B48, 47B49, 47L07, 54F05, 62J99}

\thanks{The first, the third, and the fourth author acknowledge the financial support from the Slovene Research Agency, ARRS (research programs
No.~P1-0285 and No.~P1-0288, and research projects No.~N1-0210 and  No. N1-0296). The authors also acknowledge the bilateral
project between Croatia and Slovenia (Generalized inverses with emphasis on searching for applications in statistics) which was financially supported by the Ministry of
Science and Education, Republic of Croatia, and by the Slovene Research Agency, ARRS (grant
BI-HR/20-21-036).}

\begin{abstract}
Let $H_{n}^{+}(\mathbb{R})$ be the cone of all positive semidefinite $n\times n$ real matrices. We describe the form of all surjective maps on $H_{n}^{+}(\mathbb{R}) $, $n\geq 3$, that preserve the minus partial order in
both directions.
\end{abstract}

\maketitle

\section{Introduction}

Let $M_{m,n}(\mathbb{F})$ be the set of all $m\times n$ matrices over a
field $\mathbb{F}$. Unless stated otherwise, $\mathbb{F} \in \{\mathbb{R},
\mathbb{C}\}$. If $m=n$, that is, for the set of all $n\times n$ square
matrices over $\mathbb{F}$ we will write $M_{n}(\mathbb{F)}$ instead of $M_{n,n}(\mathbb{F)}$. A matrix $A\in M_{n}(\mathbb{F)}$ has a (unique)
inverse $A^{-1}$ (a matrix satisfying $AA^{-1}=A^{-1}A=I$, where $I$ is the
identity matrix) if and only if it is a square matrix with linearly
independent columns (or rows). However, the need for some kind of a partial
inverse of a more general class of matrices (even non-square ones) has
arisen from applications. For given $A\in M_{m,n}(\mathbb{F})$, any matrix $X \in M_{n,m}(\mathbb{F})$ which is a solution to the equation $AXA=A$ is
called \textit{an inner generalized inverse} of $A$ and it is denoted by $A^{-}$. Every matrix $A\in M_{m,n}(\mathbb{F})$ has an inner generalized
inverse, but it is not unique in general. If $A \in M_n(\mathbb{F})$ has
inverse $A^{-1}$, then its inner generalized inverse is unique and equal to $A^{-1}$. This justifies the use of the word 'generalized'. More on inner
generalized inverses (and other generalized inverses, such as \textit{the Moore-Penrose inverse}  $A^{\dagger}$  for example) can be found in, e.g.,~\cite{B-I-T, MitraKnjiga}. As usual, let $A^{T}\in M_{n,m}(\mathbb{F})$
denote the transpose, $A^{\ast }\in M_{n,m}(\mathbb{C})$ the conjugate
transpose, $\text{Im} A$ the image (i.e.~the column space), and $\text{Ker} A
$ the kernel (the nullspace) of $A\in M_{m,n}(\mathbb{F})$.

Generalized inverses are closely related to partial orders on matrices.
There are many partial orders which may be defined on various sets of
matrices (e.g.,~the L\"owner, the minus, the sharp, the star, and also the
corresponding one-sided orders). \textit{The minus partial order} may be
defined on the full set $M_{m,n}(\mathbb{F})$. For $A,B\in M_{m,n}(\mathbb{F})$ we say that $A$ is below $B$ with respect to the minus partial order and
write
\begin{equation*}
A\leq ^{-}B\quad \text{when}\quad A^{-}A=A^{-}B\text{ and }AA^{-}=BA^{-}
\end{equation*}for some inner generalized inverse $A^{-}$ of $A$. The minus partial order
is also known as \textit{the rank subtractivity partial order} since for $A,B\in M_{m,n}(\mathbb{F})$ we have
\begin{equation}
A\leq ^{-}B\quad \text{if and only if}\quad \text{rank}(B-A)=\text{rank}(B)-\text{rank}(A).  \label{eq_rank_minus}
\end{equation}
Let us mention that the minus partial order was originally introduced by Hartwig in
\cite{Hartwig} and independently by Nambooripad in \cite{Nambooripad} on a
general regular semigroup however it was mostly studied on $M_{n}(\mathbb{F}) $ (see \cite{Mitra} and the references therein).

The minus partial order has some applications in statistics, especially in
the theory of linear statistical models (see \cite[Sections 15.3, 15.4]{MitraKnjiga}). Let
\begin{equation*}
y=X\beta +\epsilon
\end{equation*}be the matrix form of a linear model. We call $y$ \textit{the response vector} (also known as \textit{the observation vector}) and $X\in M_{m,n}(\mathbb{F})$ \textit{the model matrix }(also known as \textit{the design }or \textit{%
regressor matrix}). In order to complete the description of the model, some
assumptions about the nature of the errors have to be made. It is assumed
that $E(\epsilon )=0$ and $V(\epsilon )=\sigma ^{2}D$, i.e., the errors have
the zero mathematical expectation and covariances are known up to a scalar
(real number). Here $V$ denotes \textit{the variance-covariance matrix}
(also known as \textit{dispersion} or \textit{covariance matrix}). The
nonnegative parameter $\sigma ^{2}$ and the vector of parameters (real
numbers) $\beta \in \mathbb{R}^{n}$ are unspecified, and $D$ is a known $%
m\times m$ real, positive semidefinite matrix. We denote this linear model
with the triplet $(y,X\beta ,\sigma ^{2}D)$. Let us now present an example
of an application of the minus partial order in the theory of linear models
(see \cite[Theorem 15.3.6]{MitraKnjiga}).

\begin{proposition}
Let $L_{1}=(y,X_{1}\theta ,\sigma ^{2}D)$ and $L_{2}=(y,X_{2}\beta ,\sigma
^{2}D)$ be any two linear models. Then $X_{1}\leq ^{-}X_{2}$ if and only if
there exists a matrix $A$ with $\func{Im}A^{T}\subseteq \func{Im}X_{2}^{T}$
and $L_{1}$ is the model $L_{2}$ constrained by $A\beta =0$.
\end{proposition}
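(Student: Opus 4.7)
The plan is to use the rank characterization of the minus partial order from~\eqref{eq_rank_minus}. I read the phrase ``$L_1$ is the model $L_2$ constrained by $A\beta=0$'' as the conjunction of two natural requirements: (i) the two models produce the same set of mean vectors, $\Image X_1=X_2(\Ker A)$, and (ii) the parametrizations agree on the constraint, $X_1\beta=X_2\beta$ for every $\beta\in\Ker A$. Both pieces turn out to be essential---a small $2\times 2$ example shows that weakening either allows $X_1\not\leq^- X_2$ while keeping the remaining hypotheses.

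For the forward direction the obvious candidate is $A:=X_2-X_1$. Given an inner inverse $X_1^-$ with $X_1^-X_1=X_1^-X_2$ and $X_1X_1^-=X_2X_1^-$, the identity
\[
X_1=X_1X_1^-X_1=X_1X_1^-X_2
\]
gives $\Image X_1^T\subseteq\Image X_2^T$, whence $\Image A^T\subseteq\Image X_2^T$. Condition (ii) is automatic from the definition of $A$; for (i), given $v=X_1\theta$ one sets $\beta:=X_1^-X_1\theta$ and uses $X_2X_1^-=X_1X_1^-$ to check $X_2\beta=X_1\theta=v$ and $A\beta=0$, which forces $\Image X_1\subseteq X_2(\Ker A)$, the reverse inclusion being immediate.

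For the backward direction, $\Image A^T\subseteq\Image X_2^T$ is equivalent to $\Ker X_2\subseteq\Ker A$, and combining this with (i) via a direct dimension count gives $\rank X_1=\rank X_2-\rank A$. By~\eqref{eq_rank_minus} it therefore suffices to prove $\rank(X_2-X_1)=\rank A$. The upper bound is immediate from (ii): $X_2-X_1$ vanishes on $\Ker A$, so $\rank(X_2-X_1)\leq\rank A$.

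The matching lower bound is the step I expect to be the most delicate, since it is precisely what ties (i) and (ii) together. I would pick any complement $W$ of $\Ker A$ in $\RR^n$; the inclusion $\Ker X_2\subseteq\Ker A$ makes $X_2|_W$ injective, so $\dim X_2(W)=\dim W=\rank A$, and one further dimension count (again using (i)) yields the direct sum $\Image X_2=\Image X_1\oplus X_2(W)$. For $w\in W$ one then has
\[
(X_2-X_1)w=X_2 w+(-X_1 w)\in X_2(W)\oplus\Image X_1,
\]
and projecting onto the first summand returns the injective map $w\mapsto X_2 w$. This forces $\rank((X_2-X_1)|_W)\geq\rank A$, completing the argument.
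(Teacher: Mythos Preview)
The paper does not prove this proposition at all; it is quoted verbatim as \cite[Theorem 15.3.6]{MitraKnjiga} and used only as motivation, so there is no ``paper's own proof'' to compare against.

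That said, your argument is internally correct once one accepts your reading of ``$L_1$ is the model $L_2$ constrained by $A\beta=0$'' as the conjunction of (i) $\Image X_1=X_2(\Ker A)$ and (ii) $X_1\beta=X_2\beta$ for every $\beta\in\Ker A$. The forward direction is clean (the line $X_1=X_1X_1^-X_1=X_1X_1^-X_2$ does indeed use $X_1^-X_1=X_1^-X_2$, so $\Image X_1^T\subseteq\Image X_2^T$ follows), and in the backward direction your direct-sum decomposition $\Image X_2=\Image X_1\oplus X_2(W)$ together with the projection trick correctly yields the matching lower bound $\rank(X_2-X_1)\ge\rank A$.

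The one point worth flagging is that your condition (ii) is doing real work and is not the only plausible reading of the phrase. In the linear-models literature the constrained model is often taken up to reparametrization, i.e.\ $X_1=X_2Z$ for some $Z$ with $\Image Z=\Ker A$, which gives (i) but not (ii). You yourself observe that (i) alone is insufficient, so the truth of the proposition hinges on exactly which convention \cite{MitraKnjiga} adopts. Your proof is sound relative to the interpretation you state; just be aware that this interpretive choice is where the content lies, and a reader may want to see it matched to the cited source.
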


Given the model $M=(y,X\beta ,\sigma ^{2}I)$ one might rather work with the
transformed model $\hat{M}=(y,\hat{X}\beta ,\sigma ^{2}I)$ because the model
matrix $\hat{X}$ has more attractive properties then $X$ (e.g. elements of $%
X $ that are very close to zero are transformed to zero). It is natural to
demand that the transformed model still retains most of properties of the
original model (e.g. has similar relations to other transformed models) so
in view of the above application of the minus partial order it is
interesting to know what transformations $\Phi $ on $M_{m,n}(\mathbb{R})$,
or on $M_{n}(\mathbb{R})$, or on some subset of $M_{n}(\mathbb{R})$ preserve
the order $\leq ^{-}$ in one or in both directions.

A generalization of the concept of the minus partial order from matrices to
operators (more precisely, to the algebra $B(\mathcal{H})$ of all bounded
linear operators on a Hilbert space $\mathcal{H}$) was studied by \v{S}emrl
in \cite{Semrl}. Let $\mathcal{A}$ be some subset of $B(\mathcal{H})$. We
say that a map $\Phi :\mathcal{A}\rightarrow \mathcal{A}$ is a bi-monotone map with respect to
the minus partial order $\leq ^{-}$ (or that it preserves the minus
partial order $\leq ^{-}$ in both directions) when
\begin{equation*}
A\leq ^{-}B\quad \text{if and only if}\quad \Phi (A)\leq ^{-}\Phi (B)
\end{equation*}%
for every $A,B\in \mathcal{A}$. Linear bijective maps on $M_{n}(\mathbb{F})$%
, where $\mathbb{F}$ is a field of more then $n$ elements, that preserve the
minus partial order in one direction (i.e., $A\leq ^{-}B$ implies $\Phi
(A)\leq ^{-}\Phi (B)$) were investigated in \cite{Guterman1N}. Bijective bi-monotone maps with respect to the minus partial order on $M_{n}(\mathbb{F})$, $n\geq 3$,
were characterized in \cite{Legisa}, and those on $B(\mathcal{H})$, where a
Hilbert space $\mathcal{H}$ is infinite-dimensional, in \cite{Semrl}.

A set of matrices that deserves special attention is the set of all positive
semidefinite matrices. Such matrices are matrix analogues to nonnegative
real numbers. We may identify them with variance-covariance matrices since
(see, e.g.,~\cite{Christensen}) every variance-covariance matrix is positive
semidefinite and conversely, every (real) positive semidefinite matrix is a
variance-covariance matrix of some multivariate distribution. These matrices
appear not only in statistics but also as elements of the search space in
convex and semidefinite programming, as kernels in machine learning, as
conductivity matrices in thermodynamics, as diffusion tensors in medical
imaging, as overlap matrices in quantum chemistry, etc. More about the
fundamental role played by positive (semi)definite matrices can be found in
\cite{Bhatia}.

Let $H_{n}(\mathbb{F})$ denote the set of all Hermitian (i.e.,~symmetric in
the real case) matrices in $M_{n}(\mathbb{F})$ and let $H_{n}^{+}(\mathbb{F})
$ be the set of all positive semidefinite matrices in $H_{n}(\mathbb{F})$.
Note that $H_{n}^{+}(\mathbb{F})$ is not a linear space, but it is a convex
cone (it is closed under addition and scalar multiplication by non-negative
scalars only). Thus, linear preservers on $H_{n}^{+}(\mathbb{F})$ cannot be
studied, but at least additive ones can. Partial orders on $H_{n}^{+}(%
\mathbb{F})$ and the corresponding bi-monotone maps were studied in \cite{GolubicMarovt}.
One of the main results from \cite{GolubicMarovt} follows.

\begin{theorem}
Let $n\geq 3$ be an integer. Then $\Phi :H_{n}^{+}(\mathbb{R})\rightarrow
H_{n}^{+}(\mathbb{R})$ is a surjective, additive map that preserves the
minus order $\leq ^{-}$ in both directions if and only if there exists an
invertible matrix $S\in M_{n}(\mathbb{R)}$ such that
\begin{equation*}
\Phi (A)=SAS^{T}
\end{equation*}%
for every $A\in H_{n}^{+}(\mathbb{R})$.
\end{theorem}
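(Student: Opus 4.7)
The ``if'' direction is immediate: for invertible $S\in M_n(\mathbb{R})$, the congruence $A\mapsto SAS^T$ is additive, bijective on $H_n^+(\mathbb{R})$, and satisfies $SBS^T-SAS^T=S(B-A)S^T$; since $S$ is invertible this preserves rank of differences, so by~\eqref{eq_rank_minus} it preserves $\leq^-$ in both directions. For the converse, the first task is to extract basic structural consequences of bi-monotonicity, additivity and surjectivity. Additivity gives $\Phi(0)=0$. If $\Phi(A)=\Phi(B)$ then $\Phi(A)\leq^-\Phi(B)$ and $\Phi(B)\leq^-\Phi(A)$, and bi-monotonicity together with antisymmetry of $\leq^-$ forces $A=B$; combined with surjectivity this shows $\Phi$ is bijective. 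Next I would establish that $\Phi$ preserves rank: the rank of $A\in H_n^+(\mathbb{R})$ equals the maximal length of a $\leq^-$-chain $0<^-X_1<^-\cdots<^-X_k=A$, because each strict step forces a strict increase in rank, and the partial sums $X_k=\sum_{i=1}^k\lambda_iv_iv_i^T$ coming from a spectral decomposition $A=\sum\lambda_iv_iv_i^T$ realise this bound. As a bijective bi-monotone map, $\Phi$ preserves chain lengths, hence rank.

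The next step is to extend $\Phi$ to a linear map on $H_n(\mathbb{R})$. Define $\tilde\Phi\colon H_n(\mathbb{R})\to H_n(\mathbb{R})$ by $\tilde\Phi(A-B):=\Phi(A)-\Phi(B)$ for $A,B\in H_n^+(\mathbb{R})$; this is well defined and additive because $\Phi$ is additive, and bijective because $\Phi$ is bijective on the generating cone $H_n^+(\mathbb{R})$. Since $\Phi(H_n^+(\mathbb{R}))\subseteq H_n^+(\mathbb{R})$, the map $\tilde\Phi$ is monotone for the L\"owner order: $X\geq 0$ implies $\tilde\Phi(X)\geq 0$. Promoting additivity to genuine $\mathbb{R}$-linearity is the main obstacle; the plan is a L\"owner-squeeze argument. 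For $X\in H_n^+(\mathbb{R})$ and real $\lambda>0$, take rationals $q_k\uparrow\lambda$ and $r_k\downarrow\lambda$; then $(\lambda-q_k)X\geq 0$ and $(r_k-\lambda)X\geq 0$, so L\"owner monotonicity together with the $\mathbb{Q}$-linearity of $\tilde\Phi$ (which follows from additivity) yields
\begin{equation*}
q_k\tilde\Phi(X)\leq\tilde\Phi(\lambda X)\leq r_k\tilde\Phi(X)
\end{equation*}
in the L\"owner order. Testing against an arbitrary $v\in\mathbb{R}^n$ and passing to the limit gives $v^T\tilde\Phi(\lambda X)v=\lambda v^T\tilde\Phi(X)v$ for every $v$, so by polarisation $\tilde\Phi(\lambda X)=\lambda\tilde\Phi(X)$. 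Additivity then propagates this to all $X\in H_n(\mathbb{R})$ and all $\lambda\in\mathbb{R}$, making $\tilde\Phi$ genuinely $\mathbb{R}$-linear.

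It remains to identify $\tilde\Phi$. I would first verify that $\tilde\Phi$ preserves rank on the whole of $H_n(\mathbb{R})$. For $A\in H_n(\mathbb{R})$ with Jordan decomposition $A=A_+-A_-$ into PSD parts with $\Image A_+\cap\Image A_-=\{0\}$, the relation $A_+\leq^-A_++A_-$ is preserved by $\Phi$, so $\Image\Phi(A_+)\cap\Image\Phi(A_-)=\{0\}$; the elementary identity $\Ker(B-C)=\Ker B\cap\Ker C$ for PSD matrices $B,C$ with disjoint images then gives $\rank\tilde\Phi(A)=\rank\Phi(A_+)+\rank\Phi(A_-)=\rank A_++\rank A_-=\rank A$. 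Thus $\tilde\Phi$ is an $\mathbb{R}$-linear bijection of $H_n(\mathbb{R})$ preserving rank, and for $n\geq 3$ the classical rank-preserver theorem for real symmetric matrices forces $\tilde\Phi(X)=\varepsilon SXS^T$ with $\varepsilon\in\{\pm 1\}$ and $S\in M_n(\mathbb{R})$ invertible. Finally, $\tilde\Phi(I)=\varepsilon SS^T\in H_n^+(\mathbb{R})\setminus\{0\}$ forces $\varepsilon=+1$, and restriction to $H_n^+(\mathbb{R})$ yields $\Phi(A)=SAS^T$.
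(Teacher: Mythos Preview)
The paper does not actually prove this theorem: it is quoted in the introduction as a prior result from \cite{GolubicMarovt}, and the paper's own contribution is the \emph{non-additive} version (Section~4), established by an entirely different geometric method built on the incidence structure of concentric ellipses. So there is no in-paper proof to compare your argument against.

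That said, your proof is correct and follows the natural linear-preserver route one would expect under the additivity hypothesis: extend $\Phi$ to an additive map $\tilde\Phi$ on all of $H_n(\mathbb{R})$, upgrade $\mathbb{Q}$-linearity to $\mathbb{R}$-linearity via the L\"owner-monotonicity squeeze, verify rank preservation on indefinite matrices through the Jordan decomposition $A=A_+-A_-$, and then invoke the structure theorem for linear rank preservers on real symmetric matrices. Each step is sound; the only place requiring care is the rank computation for $\tilde\Phi(A_+-A_-)$, and your kernel identity $\Ker(B-C)=\Ker B\cap\Ker C$ for positive semidefinite $B,C$ with $\Image B\cap\Image C=\{0\}$ (together with $\Phi(A_+)\leq^{-}\Phi(A_+)+\Phi(A_-)$ to secure that image condition) handles it cleanly. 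This line of argument is precisely what additivity buys you, and it explains why the present paper must develop the much heavier ellipse machinery of Section~3 once additivity is dropped.
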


In this paper we study bi-monotone maps with respect to the minus partial order on $%
H_{n}^{+}(\mathbb{R})$ without the additivity assumption.
In Section~2, we give some preliminary results. It turns out that ellipses centered at the origin in $\mathbb{R}^{2}$ can be
identified with operators in $H_{n}^{+}(\mathbb{R})$ whose images are equal
to a fixed 2-dimensional subspace of $\mathbb{R}^{n}$, while the points on these
ellipses correspond to rank-one positive semidefinite operators which are  dominated by the corresponding rank-two operators in $H_{n}^{+}(\mathbb{R})$ with respect to the minus partial order. Thus, in Section~3 we study incidence relation among concentric ellipses. The main result which
describes the form of all surjective bi-monotone maps with respect to the minus partial
order on $H_{n}^{+}(\mathbb{R}) $, $n\geq 3$, is stated and proved in
Section~4.

\section{Preliminaries}

Let us present some tools that will be useful throughout the paper. As
before, let $\mathbb{F=R}$ or $\mathbb{F=C}$. Let $P_{n}(\mathbb{F})$ be the
set of all projectors in $H_{n}^{+}(\mathbb{F})$, i.e., the set of all
orthogonal projection matrices in $M_{n}(\mathbb{F)}$. Let $V$ be a
subspace of $\mathbb{F}^{n}$. By $P_{V}\in P_{n}(\mathbb{F})$ we will denote
the projector with $\func{Im}P_{V}=V$. We will denote by $%
x\otimes y^{\ast }$ a rank one linear operator on $B(\mathcal{H})$ defined
by $(x\otimes y^{\ast })z=\left\langle z,y\right\rangle x$ for every $z\in
\mathcal{H}$. Note that every rank-one linear operator on $\mathcal{H}$ may
be written in this form. Recall that when $\mathcal{H}$ is finite
dimensional, say that the dimension of $\mathcal{H}$ equals $n$, we may
identify $B(\mathcal{H})$ with $M_{n}(\mathbb{F})$, and that in $\mathbb{F}%
^{n}$, $\left\langle z,y\right\rangle =y^{\ast }z$. It turns out that $P\in
P_{n}(\mathbb{F})$ is of rank-one if and only if $P=x\otimes x^{\ast }$ for
some $x\in \mathbb{F}^{n}$ with $\left\Vert x\right\Vert =1$. Indeed, let
first $x\in \mathbb{F}^{n}$ with $\left\Vert x\right\Vert =1$. Then $%
(x\otimes x^{\ast })^{\ast }=x\otimes x^{\ast }$ and for every $z\in \mathbb{%
F}^{n}$ we have $(x\otimes x^{\ast })^{2}z=\left\langle z,x\right\rangle
\left\langle x,x\right\rangle x=\left\langle z,x\right\rangle x=(x\otimes
x^{\ast })z$. Conversely, let $P$ be of rank-one and $P^{2}=P=P^{\ast }$.
Then $P=x\otimes y^{\ast }$ for some nonzero $x,y\in \mathbb{F}^{n}$. By
transferring the appropriate scalar to the second factor we may assume
without loss of generality that $\left\Vert x\right\Vert =1$. From $x\otimes
y^{\ast }=(x\otimes y^{\ast })^{\ast }=y\otimes x^{\ast }$ it follows that $%
y=\mu x$ for some nonzero $\mu \in \mathbb{F}$ and $(x\otimes y^{\ast
})^{2}=x\otimes y^{\ast }$ then implies that $\mu =1$, i.e., $P=x\otimes
x^{\ast }$ with $\left\Vert x\right\Vert =1$. The following auxiliary result
was proved in \cite{Semrl}.

\begin{lemma}
\label{Lemma_Semrl}Let $\mathcal{H}$ be a Hilbert space and let $x,y\in
\mathcal{H}$ be nonzero vectors and $A\in B(\mathcal{H})$. Then $x\otimes
y^{\ast }\leq ^{-}A$ if and only if $x\in \func{Im}A$ and there exists $z\in
H$ such that $\left\langle x,z\right\rangle =1$ and $y=A^{\ast }z$. In
particular, rank-one $x\otimes x^{\ast }\in H_{n}^{+}(\mathbb{F})$ satisfies
$x\otimes x^{\ast }\leq ^{-}A$ if and only if $x\in \func{Im}A$ and there
exists $z\in \mathbb{F}^{n}$ such that $\left\langle x,z\right\rangle =1$
and $x=Az$.
\end{lemma}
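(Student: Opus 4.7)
The plan is to unwind the definition of the minus partial order: $x\otimes y^*\leq^- A$ amounts to the existence of an inner generalized inverse $T\in B(\mathcal{H})$ of $P:=x\otimes y^*$ satisfying $TP=TA$ and $PT=AT$. The rank-one product rule $(x\otimes y^*)(u\otimes v^*)=\langle u,y\rangle\,(x\otimes v^*)$ immediately gives
\[
PTP=(x\otimes y^*)\,T\,(x\otimes y^*)=\langle Tx,y\rangle\,(x\otimes y^*),
\]
so $T$ is an inner inverse of $P$ precisely when $\langle Tx,y\rangle=1$, while a direct calculation rewrites the two side conditions as $TA=Tx\otimes y^*$ and $AT=x\otimes(T^*y)^*$.

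For the ``if'' direction, assume $x=Aw$ and that $z$ satisfies $\langle x,z\rangle=1$ and $A^*z=y$. I would propose the explicit candidate $T:=w\otimes z^*$. Then $Tx=\langle x,z\rangle w=w$, and $\langle Tx,y\rangle=\langle w,A^*z\rangle=\langle Aw,z\rangle=\langle x,z\rangle=1$, so $T$ is an inner inverse of $P$. The same rank-one product rule then verifies $TA=w\otimes y^*=Tx\otimes y^*$ and $AT=x\otimes z^*=x\otimes(T^*y)^*$, hence $x\otimes y^*\leq^- A$.

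For the ``only if'' direction, given any witness $T$, the natural choice is $z:=T^*y$. The inner-inverse identity yields $\langle x,z\rangle=\langle Tx,y\rangle=1$. Evaluating $AT=PT=x\otimes(T^*y)^*$ at the vector $x$ gives $A(Tx)=\langle Tx,y\rangle\,x=x$, so $x\in\func{Im}A$. Finally, taking the adjoint of $TA=Tx\otimes y^*$ gives $A^*T^*=y\otimes(Tx)^*$; applying this to $y$ produces $A^*z=\overline{\langle Tx,y\rangle}\,y=y$.

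The main obstacle is guessing the candidates $T=w\otimes z^*$ and $z=T^*y$; everything else is a mechanical application of the rank-one product rule together with $\langle Tx,y\rangle=1$. The ``in particular'' statement is the specialization $y=x$, combined with the observation that every $A\in H_n^+(\mathbb{F})$ is self-adjoint, so $A^*z=y=x$ reads simply $Az=x$.
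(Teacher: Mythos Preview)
Your argument is correct. Note, however, that the paper does not actually prove this lemma: it is quoted from \v{S}emrl \cite{Semrl} without proof, so there is no ``paper's own proof'' to compare against. Your direct verification---identifying that an inner inverse $T$ of $P=x\otimes y^*$ is characterized by $\langle Tx,y\rangle=1$, then exhibiting $T=w\otimes z^*$ for the ``if'' direction and extracting $z=T^*y$ for the ``only if'' direction---is a clean self-contained proof via the rank-one calculus, and every step checks out.
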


We denote by $e_{1},e_{2},\ldots ,e_{n}\in \mathbb{F}^{n}$ the standard
basis vectors. In the next lemma we answer the question of  when a rank-one positive
semidefinite matrix is dominated with respect to the minus partial order by
a given positive semidefinite matrix.

\begin{lemma} \label{lem:2.2}
\label{Lemma_ellipse}Let $A\in H_{n}^{+}(\mathbb{F})$ be nonzero with the
eigenvalue decomposition
\begin{equation*}
A=\alpha _{1}e_{i_{1}}\otimes e_{i_{1}}^{\ast }+\alpha _{2}e_{i_{2}}\otimes
e_{i_{2}}^{\ast }+\ldots +\alpha _{j}e_{i_{j}}\otimes e_{i_{j}}^{\ast }
\end{equation*}%
where  $ i_1, i_2, \dots, i_j \in \{1,2,\dots,n\}$ are pairwise distinct, $ 1 \leq j \leq n $, and
$\alpha _{k}>0$ for every $k\in \left\{
1,2,\ldots ,j\right\} $. Let $x\in \mathbb{F}^{n}$ be a nonzero vector. Then
$x\otimes x^{\ast }\leq ^{-}A$ if and only if there exist $\beta _{1},\beta
_{2},\ldots ,\beta _{j}\in \mathbb{F}\ $such that%
\begin{equation}
x=\beta _{1}e_{i_{1}}+\beta _{2}e_{i_{2}}+\ldots +\beta _{j}e_{i_{j}}\quad
\text{and\quad }\frac{\left\vert \beta _{1}\right\vert ^{2}}{\alpha _{1}}+%
\frac{\left\vert \beta _{2}\right\vert ^{2}}{\alpha _{2}}+\ldots +\frac{%
\left\vert \beta _{j}\right\vert ^{2}}{\alpha _{j}}=1.
\label{en_Lemma_ellipse}
\end{equation}
\end{lemma}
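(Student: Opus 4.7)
The statement is essentially a coordinatewise reformulation of \v{S}emrl's criterion in Lemma \ref{Lemma_Semrl}, so my plan is to apply that lemma and then solve the resulting pair of equations explicitly in the eigenbasis.

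First, I would observe that since the eigenvectors of $A$ are taken to be standard basis vectors and all $\alpha_k$ are strictly positive,
\[
\func{Im} A = \span\{e_{i_1},e_{i_2},\ldots,e_{i_j}\}.
\]
By Lemma \ref{Lemma_Semrl}, $x\otimes x^{\ast}\leq^{-}A$ already forces $x\in\func{Im}A$, which gives the first half of \eqref{en_Lemma_ellipse}: there exist $\beta_1,\ldots,\beta_j\in\FF$ with $x=\sum_k\beta_k e_{i_k}$. Conversely, any $x$ of this form lies in $\func{Im}A$, so this part of the equivalence is immediate.

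Next, assuming $x\in\func{Im}A$ is written in that form, I would encode the remaining conditions $x=Az$ and $\langle x,z\rangle=1$ of Lemma \ref{Lemma_Semrl} directly. For $z\in\FF^n$ with coordinates $z_m=e_m^{\ast}z$,
\[
Az=\sum_{k=1}^{j}\alpha_k \langle z,e_{i_k}\rangle e_{i_k}=\sum_{k=1}^{j}\alpha_k z_{i_k}\,e_{i_k},
\]
so the equation $x=Az$ is equivalent to $z_{i_k}=\beta_k/\alpha_k$ for $k=1,\ldots,j$ (the other coordinates of $z$ being free). Using the paper's convention $\langle x,z\rangle=z^{\ast}x$ and the fact that $x$ is supported on the indices $i_1,\ldots,i_j$,
\[
\langle x,z\rangle=\sum_{k=1}^{j}\overline{z_{i_k}}\,\beta_k=\sum_{k=1}^{j}\frac{\overline{\beta_k}\beta_k}{\alpha_k}=\sum_{k=1}^{j}\frac{|\beta_k|^2}{\alpha_k},
\]
where I used that each $\alpha_k$ is a positive real number. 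Setting this equal to $1$ yields exactly the ellipsoidal relation in \eqref{en_Lemma_ellipse}.

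For the converse, given $\beta_1,\ldots,\beta_j$ satisfying \eqref{en_Lemma_ellipse}, I would explicitly define $z:=\sum_{k=1}^{j}(\beta_k/\alpha_k)e_{i_k}$ and verify both $Az=x$ and $\langle x,z\rangle=1$ by the same computations above, and then invoke Lemma \ref{Lemma_Semrl} again to conclude $x\otimes x^{\ast}\leq^{-}A$. There is no real obstacle here: the only point requiring care is tracking the sesquilinearity convention consistently (so that conjugates on $\beta_k$ and real positivity of $\alpha_k$ combine to produce $|\beta_k|^2/\alpha_k$ rather than $\beta_k^2/\alpha_k$), which matters when $\FF=\CC$ but is automatic when $\FF=\RR$.
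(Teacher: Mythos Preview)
Your proposal is correct and follows essentially the same approach as the paper: both apply Lemma~\ref{Lemma_Semrl} to reduce the problem to the pair of conditions $x=Az$ and $\langle x,z\rangle=1$, then solve coordinatewise in the eigenbasis to obtain $z_{i_k}=\beta_k/\alpha_k$ and hence $\langle x,z\rangle=\sum_k|\beta_k|^2/\alpha_k$. The only cosmetic difference is that the paper writes the intermediate steps via $\langle e_{i_k},z\rangle$ rather than the coordinate notation $z_{i_k}$, and allows the remaining coordinates of $z$ to be arbitrary in the converse rather than setting them to zero.
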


\begin{proof}
Suppose first that $x\otimes x^{\ast }\leq ^{-}A$. By Lemma %
\ref{Lemma_Semrl}, $x\in \func{Im}A$ and thus there exist $\beta _{1},\beta
_{2},\ldots ,\beta _{j}\in \mathbb{F}$ such that $x=\beta
_{1}e_{i_{1}}+\beta _{2}e_{i_{2}}+\ldots +\beta _{j}e_{i_{j}}$. Also (see
Lemma \ref{Lemma_Semrl}), there exists $z\in \mathbb{F}$, such that $%
\left\langle x,z\right\rangle =1$ and $x=Az$. It follows that $\left\langle
\beta _{1}e_{i_{1}}+\beta _{2}e_{i_{2}}+\ldots +\beta
_{j}e_{i_{j}},z\right\rangle =1$ and therefore%
\begin{equation*}
\beta _{1}\left\langle e_{i_{1}},z\right\rangle +\beta _{2}\left\langle
e_{i_{2}},z\right\rangle +\ldots +\beta _{j}\left\langle
e_{i_{j}},z\right\rangle =1.
\end{equation*}%
From $Az=x$ we have $\alpha _{1}\left\langle z,e_{i_{1}}\right\rangle
e_{i_{1}}+\alpha _{2}\left\langle z,e_{i_{2}}\right\rangle e_{i_{2}}+\ldots
+\alpha _{j}\left\langle z,e_{i_{j}}\right\rangle e_{i_{j}}=$ $\beta
_{1}e_{i_{1}}+\beta _{2}e_{i_{2}}+\ldots +\beta _{j}e_{i_{j}}$ and thus $%
\alpha _{k}\left\langle z,e_{i_{k}}\right\rangle =\beta _{k}$ for every $%
k\in \left\{ 1,2,\ldots ,j\right\} $. So,
\begin{equation*}
\left\langle e_{i_{k}},z\right\rangle =\frac{\overline{\beta _{k}}}{\alpha
_{k}},\quad k=1,2,\ldots ,j.
\end{equation*}%
It follows that $\beta _{1}\frac{\overline{\beta _{1}}}{\alpha _{1}}+\beta
_{2}\frac{\overline{\beta _{2}}}{\alpha _{2}}+\ldots +\beta _{j}\frac{%
\overline{\beta _{j}}}{\alpha _{j}}=1$ and therefore%
\begin{equation*}
\frac{\left\vert \beta _{1}\right\vert ^{2}}{\alpha _{1}}+\frac{\left\vert
\beta _{2}\right\vert ^{2}}{\alpha _{2}}+\ldots +\frac{\left\vert \beta
_{j}\right\vert ^{2}}{\alpha _{j}}=1.
\end{equation*}

Conversely, let $\beta _{1},\beta _{2},\ldots ,\beta _{j}\in \mathbb{F}\ $be
such that $x=\beta _{1}e_{i_{1}}+\beta _{2}e_{i_{2}}+\ldots +\beta
_{j}e_{i_{j}}$ and
\begin{equation*}
\frac{\left\vert \beta _{1}\right\vert ^{2}}{\alpha _{1}}+\frac{\left\vert
\beta _{2}\right\vert ^{2}}{\alpha _{2}}+\ldots +\frac{\left\vert \beta
_{j}\right\vert ^{2}}{\alpha _{j}}=1.
\end{equation*}%
Let $z~=~\left( z_{1},z_{2},\ldots ,z_{n}\right) ^{T}\in \mathbb{F}^{n}$ be
such that $z_{i_{k}}=\frac{\beta _{k}}{\alpha _{k}}$ for every $k\in \left\{
1,2,\ldots ,j\right\} $ while other (possible) components can be arbitrary.
Then
\begin{equation*}
\left\langle x,z\right\rangle =\frac{\left\vert \beta _{1}\right\vert ^{2}}{%
\alpha _{1}}+\frac{\left\vert \beta _{2}\right\vert ^{2}}{\alpha _{2}}%
+\ldots +\frac{\left\vert \beta _{j}\right\vert ^{2}}{\alpha _{j}}=1
\end{equation*}%
and
\begin{equation*}
Az=\alpha _{1}\left\langle z,e_{i_{1}}\right\rangle e_{i_{1}}+\alpha
_{2}\left\langle z,e_{i_{2}}\right\rangle e_{i_{2}}+\ldots +\alpha
_{j}\left\langle z,e_{i_{j}}\right\rangle e_{i_{j}}=\alpha _{1}\frac{\beta
_{1}}{\alpha _{1}}e_{i_{1}}+\alpha _{2}\frac{\beta _{2}}{\alpha _{2}}%
e_{i_{2}}+\ldots +\alpha _{j}\frac{\beta _{j}}{\alpha _{j}}e_{i_{j}}=x
\end{equation*}%
and thus by Lemma \ref{Lemma_Semrl}, $x\otimes x^{\ast }\leq ^{-}A$.
\end{proof}

\begin{remark}
\label{rem:geometric} Notice that equation (\ref{en_Lemma_ellipse}) is
equivalent to $x^{\ast }A^{\dagger }x=1$ where $A^{\dagger }$ is the
Moore-Penrose inverse of the matrix $A$. Therefore, every rank-one operator $%
x\otimes x^{\ast }\in H_{n}^{+}(\mathbb{F})$ which is majorized by a nonzero
operator $A\in H_{n}^{+}(\mathbb{F})$ is induced by a vector $x\in \func{Im}%
A $ which determines a point that lies on an ellipsoid $\left\{ x\in \func{Im%
}A:x^{\ast }A^{\dagger }x=1\right\} $. Clearly, every nonzero $x=Az\in \func{%
Im}A$ satisfies $x^{\ast }A^{\dagger }x=z^{\ast }AA^{\dagger }Az=z^{\ast
}Az=z^{\ast }\sqrt{A}\sqrt{A}z=\Vert \sqrt{A}z\Vert ^{2}\neq 0$ so, when
appropriately scaled belongs to ellipsoid induced by $A$. It follows that if
$B\in H_{n}^{+}(\mathbb{F})\backslash \{0\}$ induces the same ellipsoid $%
\mathfrak{E}$ as $A\in H_{n}^{+}(\mathbb{F})$, then each nonzero $x\in \func{%
Im}A$, when appropriately scaled, lies on $\mathfrak{E}$ so satisfies $x\in
\func{Im}B$. Thus, $\func{Im}A=\func{Im}B$. Moreover, $x^{\ast }A^{\dagger
}x=1$ if and only if $x^{\ast }B^{\dagger }x=1$ for each $x\in \func{Im}A=%
\func{Im}B$. Next, $AA^\dagger$ is a projector onto $\func{Im}A$, so   $A^{\dagger }x=(A^{\dagger }AA^{\dagger })x=A^{\dagger }(AA^{\dagger })x=0$ for each  $x\in \left( \func{Im}A\right) ^{\bot }$, and likewise $B^{\dagger }x=0$ for
each  $x\in \left( \func{Im}B\right) ^{\bot }=\left( \func{Im}A\right) ^{\bot }$. Therefore, $x^{\ast }\left(
A^{\dagger }-B^{\dagger }\right) x=0$ for every $x\in \mathbb{F}^{n}$, and
since $A^{\dagger }$ and $B^{\dagger }$ are Hermitian matrices, $A=B$.
This shows that $A = B$ is equivalent to $C \leq ^{-} A$ if
and only if $C \leq ^{-} B$ for every rank-one $C \in H_{n}^{+}(\mathbb{F})$.
\end{remark}

In particular, ellipses centered at the origin in $\mathbb{R}^{2}$ can be
identified with operators in $H_{n}^{+}(\mathbb{R})$ whose images are equal
to a fixed 2-dimensional subspace of $\mathbb{R}^{n}$ while points on these
ellipses correspond to rank-one positive semidefinite operators majorized by
the corresponding rank-two operators in $H_{n}^{+}(\mathbb{R})$.

Given a rank-two $A\in H_{n}^{+}(\mathbb{R})$, let
\begin{equation} \label{eq:ellipse}
\mathfrak{E}_{A}=\left\{ x\in \func{Im}A:x^{\ast }A^{\dagger }x=1\right\}
\end{equation}%
be the corresponding ellipse induced by $A$.

\section{Incidence relation among concentric ellipses}
\begin{lemma}\label{lem:3.1}
 Let $\Pi_1,\Pi_2\subseteq\RR^n$ be two two-dimensional subspaces, and let $E_1\subseteq\Pi_1$ and $E_2\subseteq\Pi_2$ be the ellipses, centered at the origin. The following are equivalent.
 \begin{itemize}
  \item[(i)] Ellipses $E_1$ and $E_2$ lie in the same space (i.e. $\Pi_1=\Pi_2$).
  \item[(ii)] There exists an ellipse $E$, which may coincide with $E_1$ or with  $E_2$,  centered at the origin  and lying is some two-dimensional subspace $\Pi$ (perhaps $E=E_1$) such that $\#(E\cap E_1)=\#(E\cap E_2)\ge 4$.
 \end{itemize}
\end{lemma}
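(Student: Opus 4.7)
My approach is to handle the two implications separately, with the core observation being that a non-degenerate conic meets a line in at most two points. For (ii)$\Rightarrow$(i), I would suppose $\#(E\cap E_1)=\#(E\cap E_2)\ge 4$ with $E\subseteq \Pi$ and show that $\Pi=\Pi_i$ for each $i\in\{1,2\}$, whence $\Pi_1=\Pi_2$. Indeed, if $\Pi\neq \Pi_i$ for some $i$, then $\Pi\cap\Pi_i$ is a subspace of $\mathbb{R}^n$ of dimension at most one, so it is contained in a single line through the origin; since $E\cap E_i\subseteq \Pi\cap\Pi_i$ and $E_i$ is a non-degenerate conic, $E\cap E_i$ would consist of at most two points, contradicting the hypothesis.

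For the direction (i)$\Rightarrow$(ii), assume $\Pi_1=\Pi_2=:\Pi$. If $E_1=E_2$, take $E=E_1$, so both intersections equal $E_1$ and are infinite. When $E_1\neq E_2$, write $E_i=\{x\in \Pi:x^T M_i x=1\}$ for positive definite matrices $M_1,M_2$ acting on $\Pi$, and look for a positive definite $M$ on $\Pi$ such that both $M-M_1$ and $M-M_2$ are indefinite. Then the candidate ellipse $E=\{x\in\Pi:x^T M x=1\}$ satisfies $\#(E\cap E_i)=4$, since $E\cap E_i$ is cut out by $E$ together with the union of two distinct real lines $\{x:x^T(M-M_i)x=0\}$ through the origin. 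To produce such $M$, fix any orthonormal basis of $\Pi$ and take $M=\mathrm{diag}(\epsilon,K)$: choosing $\epsilon>0$ smaller than the $(1,1)$-entry of each $M_i$ and $K>0$ larger than the $(2,2)$-entry of each $M_i$, a direct computation gives $\det(M-M_i)<0$, so $M-M_i$ is indefinite for each $i$ while $M$ remains positive definite.

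The main obstacle, though a modest one, is the construction in (i)$\Rightarrow$(ii) when $E_1\neq E_2$: when the two ellipses are deeply nested and very differently shaped, natural candidates such as a circle of intermediate radius can fail to meet each in four points, so one needs the flexibility of matrices with widely separated eigenvalues to enforce indefiniteness of both differences $M-M_i$ simultaneously. The reverse implication is essentially a single-step application of the line–conic incidence bound and requires no construction.
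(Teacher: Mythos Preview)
Your argument is correct and follows the same approach as the paper. The (ii)$\Rightarrow$(i) direction is identical in spirit (four common points cannot lie on a line, forcing the planes to coincide), and for (i)$\Rightarrow$(ii) the paper simply asserts that ``it is easy to construct another ellipse $E$ with the required properties'' without giving details, whereas you supply an explicit and clean construction via $M=\mathrm{diag}(\epsilon,K)$ that makes both differences $M-M_i$ indefinite.
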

\begin{proof}
If $\#(E_1\cap E)\ge 4$, then the two-dimensional spaces $\Pi_1$ and $\Pi$ contain at least four nonzero points which cannot belong to the same line. Hence, $\Pi=\Pi_1$.

Conversely, if $E_1,E_2$ belong to the same plane then it is easy to construct another ellipse $E$ with the required properties.
\end{proof}

Throughout the rest of this section the ambient space is the plane $\RR^2$. We assume that all ellipses are concentric (i.e., their axes intersect at the same point, called the center of ellipse) with the center at  the origin.
Thus, they are given by the equation $x^T Qx=1$ and denoted by $E_Q$, where $Q \in H_{2}^{+}(\mathbb{R})$ is a positive-definite matrix.
Note that an ellipse $E_Q$ lies inside the unit circle if and only if both eigenvalues of $Q$ are greater  or equal to $1$.
To shorten the narrative, a concentric ellipse whose axes lie on the coordinate axes will be called a \emph{standard ellipse}.

Let $K_{\rho}$ be a circle with the center at the origin and radius $\rho$. In particular, ${\bf K} = K_1$ is the unit circle.
We will consider bijection $\Phi\colon\RR^2\to\RR^2$ which map concentric ellipses into concentric ellipses. Hence, if $E$ is an ellipse, then $\Phi(E)$ is either an ellipse or a part of a uniquely determined ellipse (uniqueness follows because $\Phi$ is injective, so the set $\Phi(E)$ is infinite). We denote by $\mathrm{int}\Phi(E)$ the interior of this unique ellipse.

Let us start with the following.

\begin{lemma}\label{lem:preserves-containment}
 Let $\Phi\colon\RR^2\to\RR^2$ be a bijection which maps concentric ellipses into concentric ellipses. Then, $\Phi$ uniformly preserves containment of ellipses inside each other or uniformly reverses it.
\end{lemma}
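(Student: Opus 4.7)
The strategy is a proof by contradiction. Assume that $\Phi$ preserves containment on one strictly nested pair $A_1\subsetneq\operatorname{int}A_2$, meaning $\Phi(A_1)\subseteq\operatorname{int}\Phi(A_2)$, and reverses containment on another strictly nested pair $B_1\subsetneq\operatorname{int}B_2$, meaning $\Phi(B_2)\subseteq\operatorname{int}\Phi(B_1)$. The aim is to propagate this conflicting behaviour along a common chain of concentric ellipses and derive a contradiction with the bijectivity of $\Phi$.

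The first ingredient is a dichotomy: for every strictly nested pair $E_1\subsetneq\operatorname{int}E_2$, exactly one of $\Phi(E_1)\subseteq\operatorname{int}\Phi(E_2)$ or $\Phi(E_2)\subseteq\operatorname{int}\Phi(E_1)$ holds. The starting point is that $E_1\cap E_2=\emptyset$ forces $\Phi(E_1)\cap\Phi(E_2)=\emptyset$; combined with the fact that two distinct concentric ellipses intersect in at most four points while each $\Phi(E_i)$ is an infinite subset of its unique ambient ellipse, one rules out that the two ambient ellipses cross or coincide (the latter using that $\bigcup_E E=\mathbb{R}^2\setminus\{0\}$ and $\Phi$ is a bijection of $\mathbb{R}^2$).

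The second ingredient is a three-term chain analysis $A\subsetneq B\subsetneq C$. By the dichotomy the three ambient image ellipses are pairwise nested, hence totally ordered, giving six possibilities; two correspond to ``uniform'' labels on the three sub-pairs (all preserved or all reversed), and the remaining four are ``mixed''. On the uniform orderings, transitivity of containment propagates the label to $(A,C)$. The main obstacle is to rule out the mixed orderings without any continuity hypothesis on $\Phi$. My planned approach is iterative: insert a two-parameter family of concentric ellipses strictly between $A$ and $B$, strictly between $B$ and $C$, strictly inside $A$, and strictly outside $C$, and track the forced positions of the corresponding image ambient ellipses on the existing totally ordered chain. Because the number of available slots on the chain is finite while the inserted family is two-parameter, one can isolate a subfamily of inserted ellipses whose images all occupy the same slot; playing off the transitivity on the resulting uniform sub-chains against the original mixing then forces an inconsistency (e.g., the need for $\Phi$ to place two distinct strictly nested pairs of inserted ellipses into contradictory positions relative to a third fixed ellipse).

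Once uniformity on every chain is established, the two given pairs $(A_1,A_2)$ and $(B_1,B_2)$ are compared by embedding them into a common chain. Choose a very small concentric ellipse $G_0$ strictly inside both $A_1$ and $B_1$ and a very large concentric ellipse $G_\infty$ strictly containing both $A_2$ and $B_2$; such $G_0$ and $G_\infty$ exist because the family of concentric ellipses is both coinitial and cofinal under containment (take sufficiently large, resp.~small, multiples of any fixed ellipse). The two chains $G_0\subsetneq A_1\subsetneq A_2\subsetneq G_\infty$ and $G_0\subsetneq B_1\subsetneq B_2\subsetneq G_\infty$ share endpoints, and uniformity on each chain together with the common endpoints forces $(A_1,A_2)$ and $(B_1,B_2)$ to carry the same label, contradicting our initial assumption and completing the proof.
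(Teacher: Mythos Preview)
Your proposal has two genuine gaps, one in each ingredient.

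\textbf{Ingredient 1 (dichotomy for pairs).} From $E_1\cap E_2=\emptyset$ and injectivity you correctly get $\Phi(E_1)\cap\Phi(E_2)=\emptyset$, but this is a statement about the \emph{image sets}, not about their ambient ellipses $F_1,F_2$. Your argument for ruling out that $F_1$ and $F_2$ cross does not go through: the fact that each $\Phi(E_i)$ is infinite while $F_1\cap F_2$ has at most four points is perfectly compatible with $F_1$ and $F_2$ crossing, as long as the (at most four) crossing points happen not to lie in $\Phi(E_1)\cap\Phi(E_2)$. Likewise, your sketch for $F_1\neq F_2$ via ``$\bigcup_E E=\mathbb{R}^2\setminus\{0\}$ and bijectivity'' is not an argument; nothing prevents two disjoint infinite arcs from sitting on the same ellipse. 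So the dichotomy is not established.

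\textbf{Ingredient 2 (ruling out mixed 3-chains).} This is where the real content lies, and your plan is not a proof. You propose to insert a two-parameter family of ellipses and invoke a pigeonhole on ``finitely many slots'', but between any two nested concentric ellipses there is a continuum of intermediate ones, so there is no finite combinatorics to exploit. The phrase ``playing off transitivity against the original mixing then forces an inconsistency'' does not identify a concrete contradiction.

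The paper's proof avoids both difficulties by a single geometric idea: three pairwise disjoint concentric ellipses $E_1,E_2,E_3$ are nested with $E_2$ in the middle if and only if every ellipse $E$ touching $E_1$ and $E_3$ meets $E_2$ in at least four points. This criterion is phrased entirely in terms of intersection cardinalities, which a bijection preserves. Applied to the images, it forces the 3-chain to be either preserved or reversed in one stroke, with no need for a separate pairwise dichotomy or any insertion argument. You should look for such an incidence-theoretic characterization of betweenness rather than trying to bootstrap from pairs.
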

\begin{proof}
Let $E_1,E_2,E_3$ be concentric ellipses.
It is easy to see the equivalence of the following statements:
\begin{itemize}
  \item[(i)] $E_1\subseteq\mathrm{int}E_2\subseteq \mathrm{int} E_3$ or  $E_1\supseteq\mathrm{int}E_2\supseteq \mathrm{int} E_3$.
  \item[(ii)]
  \begin{itemize}
    \item[(a)]   $E_1,E_2,E_3$ are pairwise disjoint.
    \item[(b)] If some ellipse $E$ touches  $E_1$ and $E_3$, then $E$ intersects $E_2$ in at least four points.
  \end{itemize}
  \end{itemize}
  Suppose $E_1,E_2,E_3$ satisfy the assumptions of (i). Then one can always find $E$ which touches $E_1$ and $E_3$, and hence intersects $E_2$ in at least four points. Applying the above equivalence to $\Phi$-images and using injectiveness of $\Phi$  we see that either $\Phi(E_1)\subseteq\mathrm{int}\Phi(E_2)\subseteq\mathrm{int}\Phi(E_3)$ or else $\mathrm{int}\Phi(E_1)\supseteq\mathrm{int}\Phi(E_2)\supseteq\Phi(E_3)$.  Arguing recursively, the same conclusion holds for any finite collection  of nested ellipses. Since each interior point of $E_3$ belongs to some concentric ellipse contained in $\mathrm{int}E_3$ we deduce that either $\Phi$ maps $\mathrm{int}E_3$ into the interior of the ellipse $\Phi(E_3)$ or into its exterior.

  We claim that $\Phi$ does this uniformly for each ellipse. Assume otherwise, and let $\hat{E}_3$ be a concentric ellipse such that $\Phi$ maps $\mathrm{int}E_3$ into interior of $\Phi(E_3)$ and  $\mathrm{int}\hat{E}_3$ into exterior of $\Phi(\hat{E}_3)$. If $E_3$ and $\hat{E}_3$ do not intersect, then one is contained in another, say $E_3\subseteq\mathrm{int}\hat{E}_3$.  We may enlarge this inclusion by adding  additional ellipse $E_2$ contained in $\mathrm{int}E_3$; then on the one hand, $\Phi(E_2)\subseteq \mathrm{int}\Phi(E_3)$, so by the above, $\Phi(E_2)\subseteq \mathrm{int}\Phi(E_3)\subseteq\mathrm{int}\Phi(\hat{E}_3)$ but on the other hand, $\Phi(\hat{E}_3)\subseteq \mathrm{int}\Phi(E_3)$, so again, by the previous arguments,  $\Phi(\hat{E}_3)\subseteq \mathrm{int}\Phi(E_3)\subseteq \mathrm{int}\Phi(E_2)$, which implies that $\Phi(E_2)\subseteq \mathrm{int}\Phi(E_2)$ a contradiction.

  Assume $E_3$ and $\hat{E}_3$ intersect or touch. Then we choose an ellipse $E_4$ which contains both $E_3$ and $\hat{E}_3$ in its interior and we choose ellipse $E_2\subseteq \mathrm{int} E_3\cap \mathrm{int}\hat{E}_3$. The inclusion $E_2\subseteq \mathrm{int} E_3\subseteq \mathrm{int} E_4$ is preserved, while the inclusion $E_2\subseteq \mathrm{int} \hat{E}_3\subseteq \mathrm{int} E_4$ is reversed, so $\Phi(E_2)$ belongs to interior and to exterior of ellipse $\Phi(E_4)$, a contradiction.
\end{proof}

\begin{lemma}\label{lem:Phi(interior)=interior} Let $\Phi\colon\RR^2\to\RR^2$ be a bijection which maps concentric ellipses into concentric ellipses and its restriction to the unit circle ${\bf K}$ equals  $x\mapsto Tx/\|Tx\|$ for some invertible diagonal matrix~$T=\diag(\mu,\lambda)$.  Then, $\Phi$ fixes ${\bf K}$ set-wise and fixes the points $(\pm1,0)$ and $(0,\pm1)$. Moreover, it  leaves  the interior of the unit circle invariant.
\end{lemma}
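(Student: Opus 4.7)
The plan is to establish the three assertions in turn.

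For the first two, direct computation suffices. Since $T=\diag(\mu,\lambda)$ is invertible, the formula $x\mapsto Tx/\|Tx\|$ is already a self-bijection of ${\bf K}$ (its inverse on ${\bf K}$ is $y\mapsto T^{-1}y/\|T^{-1}y\|$), so $\Phi({\bf K})={\bf K}$. Evaluating the formula at $\pm e_i$ gives $\pm\mathrm{sgn}(T_{ii})\,e_i\in\{\pm e_1,\pm e_2\}$, so the four-point set $\{(\pm 1,0),(0,\pm 1)\}$ is $\Phi$-invariant.

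Before turning to the interior, I will show $\Phi(0)=0$: the origin is the unique point of $\RR^2$ lying on no concentric ellipse, so if $\Phi(0)\neq 0$ then surjectivity of $\Phi$ produces a nonzero $p$ with $\Phi(p)=0$; but $p$ lies on some concentric ellipse $F$, whose image $\Phi(F)$ is contained in a concentric ellipse $\hat F$, forcing $0\in\hat F$, a contradiction.

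For the interior invariance I invoke Lemma~\ref{lem:preserves-containment}: $\Phi$ either uniformly preserves or uniformly reverses strict containment among concentric ellipses. In the preserving case, for each nonzero $p\in\mathrm{int}\,{\bf K}$ the circle $K_{\|p\|}$ is strictly inside ${\bf K}$, so $\Phi(K_{\|p\|})\subseteq\mathrm{int}\,\Phi({\bf K})=\mathrm{int}\,{\bf K}$ and hence $\Phi(p)\in\mathrm{int}\,{\bf K}$; combined with $\Phi(0)=0$ this gives $\Phi(\mathrm{int}\,{\bf K})\subseteq\mathrm{int}\,{\bf K}$. The analogous argument applied to circles $K_{\|q\|}$ with $\|q\|>1$ (which strictly contain ${\bf K}$) yields $\Phi(\mathrm{ext}\,{\bf K})\subseteq\mathrm{ext}\,{\bf K}$, and then bijectivity of $\Phi$ on $\RR^2$ combined with $\Phi({\bf K})={\bf K}$ forces equality: $\Phi(\mathrm{int}\,{\bf K})=\mathrm{int}\,{\bf K}$.

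The main obstacle will be to rule out the reversing case, in which $\Phi(\mathrm{int}\,{\bf K}\setminus\{0\})\subseteq\mathrm{ext}\,{\bf K}$, incompatible with the desired conclusion. My approach is to exploit the rigidity of $\Phi|_{\bf K}(x)=Tx/\|Tx\|$: a concentric ellipse $E$ internally tangent to ${\bf K}$ at a pair $\pm v\in\{(\pm 1,0),(0,\pm 1)\}$ must have its image contained in an ellipse tangent to ${\bf K}$ at the $\Phi$-image of $\pm v$, and the reversing hypothesis would force this tangency to be external; by comparing with a sequence of strictly inside ellipses approximating $E$ (whose images must then contain ${\bf K}$ in their interiors) and tracking the enforced intersection counts via bijectivity, one obtains an incompatibility. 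I expect this elimination of the reversing case to be the most delicate step, whereas the preserving case yields interior invariance by the routine bookkeeping outlined above.
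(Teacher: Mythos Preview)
Your treatment of the first two assertions and of the preserving case is correct and, in fact, more explicit than the paper's (which dispatches ``the first part'' as clear and, via Lemma~\ref{lem:preserves-containment}, passes immediately to the reversing case). The extra observation $\Phi(0)=0$ is harmless.

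The genuine gap is in the reversing case. Your sketch promises a contradiction by taking an ellipse $E$ internally tangent to ${\bf K}$, ``approximating'' it by strictly interior ellipses $E_n$, and then ``tracking the enforced intersection counts via bijectivity.'' But $\Phi$ carries no continuity hypothesis, so an approximation argument has no traction: knowing that each $\Phi(E_n)$ contains ${\bf K}$ in its interior tells you nothing about $\Phi(E)$ in any limiting sense. Moreover, injectivity only controls $\Phi(E)\cap{\bf K}$, not $\hat E\cap{\bf K}$ where $\hat E$ is the ellipse \emph{containing} $\Phi(E)$; since $\Phi(E)$ may be a proper subset of $\hat E$, you cannot conclude that $\hat E$ is tangent (rather than four-point secant) to ${\bf K}$. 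As written, the outline does not isolate any quantity that is forced to be simultaneously large and small.

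The paper's argument is substantively different. Assuming the reversing case, it fixes a radius $\rho<1$ and builds two pairs of standard ellipses inside ${\bf K}$: one pair $E_1,E_2$ touching ${\bf K}$ at $(\pm1,0)$ and $(0,\pm1)$ and intersecting on $K_\rho$, and another pair $E_1',E_2'$ touching $K_\rho$ itself. From the known action of $\Phi$ on the four fixed points it deduces that $\Phi(K_\rho)$ is a standard ellipse in the exterior of ${\bf K}$ whose intersection with $\Phi(E_1),\Phi(E_2)$ lies inside the square $[-1,1]^2$; combining this with the touching conditions from $\Phi(E_1'),\Phi(E_2')$ yields the inequality $1/a_\rho^2+1/b_\rho^2>1$ for the semi-axes $a_\rho,b_\rho$ of $\Phi(K_\rho)$. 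A short dichotomy then shows that one of $a_\rho,b_\rho$ is below $\sqrt{2}$ uniformly in $\rho$, so $\Phi(\mathrm{int}\,{\bf K})$ is trapped in a strip of width $2\sqrt{2}$, contradicting surjectivity (e.g.\ $(4,0)$ has no preimage). To complete your proof you need some concrete mechanism of this kind---a uniform geometric bound that collides with surjectivity---rather than a limiting heuristic.
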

\begin{proof} The first part is clear. For the second part, the previous lemma implies that $\Phi$ either maps~$\mathrm{int}{\bf K}$ into itself or into exterior of the unit circle.
 Assume the former.
 Then the previous lemma implies that $\Phi$ also maps   its exterior onto its interior.
Let us choose a  circle  $K_\rho$ of radius $\rho<1$ and let $E_1$ and $E_2$ be  two standard ellipses,
touching  the unit circle ${\bf K}$ at  $(\pm 1,0)$ and $(0,\pm 1)$ respectively and intersecting  at    $\frac{1}{\sqrt{2}}(\pm \rho, \pm \rho)$; note that $E_1$ and $E_2$ intersect on the circle $K_\rho$.

By the assumptions, the ellipses $\Phi(E_1)$ and $\Phi(E_2)$ lie outside the unit circle and touch it at $(\pm 1,0)$ and $(0,\pm 1)$ respectively
because these four points  are fixed by $\Phi$.
Thus, $\Phi(E_1)$ and $\Phi(E_2)$ are also standard ellipses.
We note that  $\Phi(E_1)$ lies in the vertical strip $[-1,1] \times \mathbb{R}$ and $\Phi(E_2)$ lies in the horizontal strip $\mathbb{R} \times [-1,1]$.
Therefore, $\Phi(E_1)$ and $\Phi(E_2)$ intersect outside the unit circle but inside the square spanned by $(\pm 1, 0)$ and $(0, \pm 1)$.
Since $\Phi(E_1)$ and $\Phi(E_2)$ are standard ellipses, they   intersect  at points that are placed symmetrically relatively to the coordinate axes.
\begin{figure}[h!]
 \centering
   \includegraphics[width=2cm]{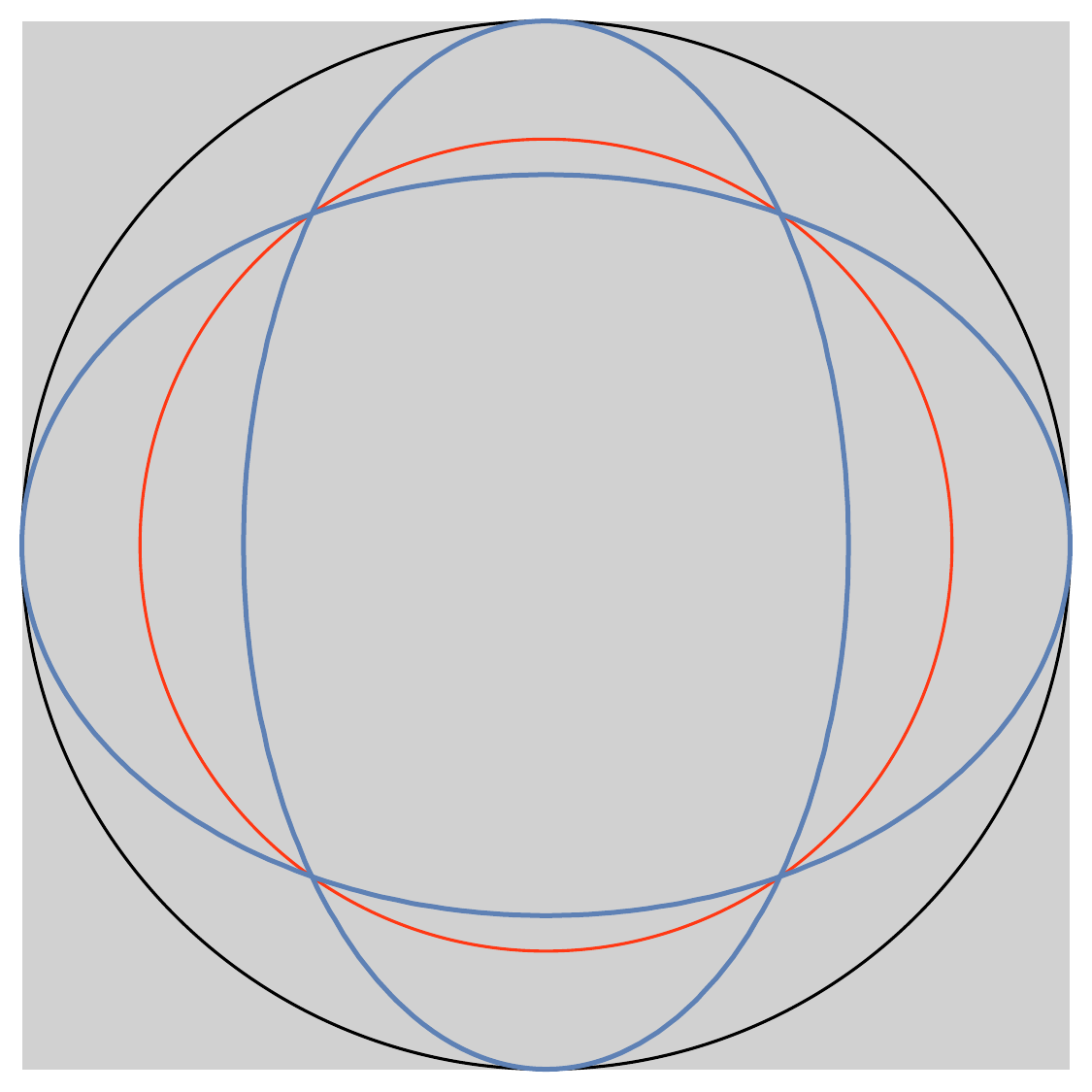}\hspace{2cm}\includegraphics[width=4cm]{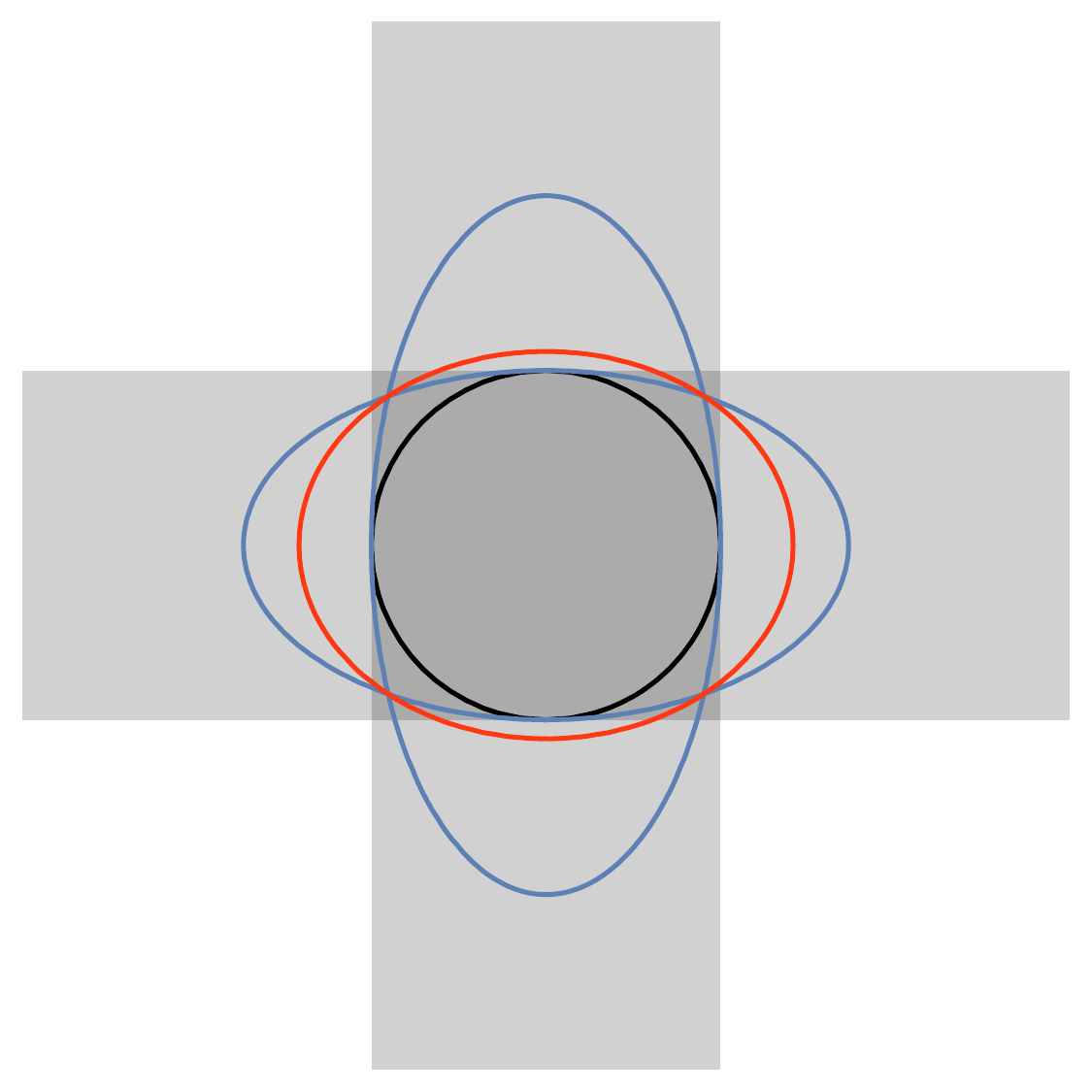}
   \caption{$E_1,E_2$ touch the unit circle and intersect at a concentric circle $K_\rho$. The image of $K_\rho$ is an ellipse passing through the symmetrically placed points of intersection of $\Phi(E_1)$ and $\Phi(E_2)$. These points belong to the interior of the shaded rectangle $[-1,1]\times[-1,1]$.}
 \end{figure}

The ellipses $E_1$ and $E_2$ intersect on $K_\rho$, so $\Phi(E_1)$ and $\Phi(E_2)$ intersect on the ellipse $\Phi(K_\rho)$.
Thus, the ellipse $\Phi(K_\rho)$ contains four symmetric points inside the square $[-1,1] \times [-1,1]$ and therefore  its axes lie on the coordinate axes.

On the other hand, let us choose ellipses $E_1^{'}$ and $E_2^{'}$, again with axes lying on the coordinate axes and touching the unit circle at $(\pm 1, 0)$ and $(0, \pm 1)$ respectively, but touching the circle $K_\rho$ at $(0, \pm \rho)$ and $(\pm \rho, 0)$ respectively.
Then $\Phi(E_1^{'})$ and $\Phi(E_2^{'})$ are ellipses touching the unit circle and lying in its exterior, so one of their axes is 1 and the other ($b_\rho$, respectively~$a_\rho$) is greater than $1$. Since $\Phi(K_\rho)$ lies outside the unit circle, its axes lie on the coordinate axes and it touches both $\Phi(E_1^{'})$ and $\Phi(E_2^{'})$, its equation is $$\frac{x^2}{a_\rho^2} + \frac{y^2}{b_\rho^2}=1.$$
Furthermore, since $\Phi(K_\rho)$ contains four symmetric points inside the square $[-1,1] \times [-1,1]$, the vertices of that square, i.e., the points $(\pm 1, \pm 1)$ lie outside $\Phi(K_\rho)$.
Thus, $$\frac{1}{a_\rho^2} + \frac{1}{b_\rho^2} > 1.$$
Then at least one of $a_\rho$, $b_\rho$ is less than $\sqrt{2}$ and this is true for every radius $\rho<1$.

If for $\Phi(K_{\rho_1})$ we have $b_{\rho_1} \geq \sqrt{2}$ and for $\Phi(K_{\rho_2})$ we have $a_{\rho_2} \geq \sqrt{2}$ then $a_{\rho_1} < \sqrt{2}$ and $b_{\rho_2} < \sqrt{2}$, so  $\Phi(K_{\rho_1})$ and $\Phi(K_{\rho_2})$ intersect; a contradiction.
Therefore, either for every radius $\rho<1$ we have $a_\rho < \sqrt{2}$, or for every radius $\rho<1$ we have $b_\rho < \sqrt{2}$.
In the first case $\Phi(K_\rho)$ lies in the vertical strip $(-\sqrt{2}, \sqrt{2}) \times \mathbb{R}$ for each concentric circle $K_\rho$ contained inside the unit circle.
Each point in the interior of ${\bf K}$ lies on some concentric circle and hence it is mapped into that vertical strip.
 Then, in particular, the point $(4,0)$ cannot be the image of any point inside the unit circle.
It  cannot be the image of any point outside the unit circle either since $\Phi$ maps exterior of the unit circle onto its interior.
The case when $b_\rho < \sqrt{2}$ for every radius $\rho<1$ analogously contradicts surjectivity of $\Phi$.
\end{proof}

We continue with a technical lemma which will help us
in calculating the  touching ellipses.

Let $R_{\phi }=\left(
\begin{smallmatrix}
 \cos \phi  & -\sin \phi  \\
 \sin \phi  & \cos \phi  \\
\end{smallmatrix}\right)$ be a rotation matrix.
Observe that  the angle between
one of the axis of the ellipse induced by the matrix
$$ R_{\phi }\left( \begin{smallmatrix}
 1 & 0 \\
 0 & r \\
\end{smallmatrix}
\right)
R_{\phi }^T $$
and the $x$-axis is equal to $\phi$. We say that such ellipse is at angle $\phi$ relative to $x$-axis.

\begin{lemma}\label{Lemma_first_existence}
Let an ellipse $E_Q$, induced by a matrix $Q$, lie in the interior of the unit circle and let $\phi \in (0, \frac{\pi}{2})$ be a given angle.
	Then there exists a unique ellipse $E$, at angle $\phi$ relative to $x$-axis, simultaneously touching the unit circle and the ellipse $E_Q$. It is induced by the matrix
	$ R_{\phi }\left( \begin{smallmatrix}
	1 & 0 \\
	0 & r \\
	\end{smallmatrix}
	\right)
	R_{\phi }^T $, where $r$ satisfies the equation
	\begin{equation} \label{eq:ellipse-equation}
	 \det\left( Q-  R_{\phi }
	\left(\begin{smallmatrix}
	1 & 0 \\
	0 & r \\
	\end{smallmatrix}\right)
	R_{\phi }^T\right) =0.
	\end{equation}
\end{lemma}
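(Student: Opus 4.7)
My plan is to reformulate tangency of two concentric ellipses as a matrix condition and then solve the resulting scalar equation.

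The first step is the reformulation. For positive-definite $Q, A \in H_2^+(\RR)$, the ellipses $E_Q$ and $E_A$ touch at a point $x$ precisely when $x^T Q x = x^T A x = 1$ and the outward normals $Qx, Ax$ are parallel. Writing $Qx = \mu Ax$ and pairing with $x$ forces $\mu = 1$, so tangency is equivalent to the existence of a nonzero $x$ with $Qx = Ax$; this in turn is equivalent to $\det(Q - A) = 0$, because the normalization $x^T Q x = 1$ can always be arranged by positive-definiteness of $Q$. The same condition encodes tangency whether the ellipses touch from inside or from outside.

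Next, I set $A_r := R_{\phi} \diag(1, r) R_{\phi}^T$ and observe that $v = R_{\phi} e_1 = (\cos\phi, \sin\phi)^T$ is a unit eigenvector of $A_r$ with eigenvalue $1$. Hence $A_r v = Iv$ and $v^T A_r v = v^T I v = 1$, so by the previous paragraph $E_{A_r}$ touches the unit circle at $\pm v$ for every $r > 0$. The remaining condition, tangency with $E_Q$, thus reduces to a single scalar equation in $r$, namely \eqref{eq:ellipse-equation}. To solve it, I conjugate by $R_{\phi}$: setting $Q' = R_{\phi}^T Q R_{\phi} = \left(\begin{smallmatrix} a & c \\ c & b \end{smallmatrix}\right)$, the equation becomes $(a-1)(b-r) = c^2$, which is linear in $r$.

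Because $E_Q$ lies strictly inside the unit circle, $Q - I$ is positive-definite in the L\"owner sense, giving $a, b > 1$ and $(a-1)(b-1) > c^2$. In particular the coefficient of $r$ is nonzero, so the equation has the unique solution $r = b - c^2/(a-1)$; the same strict inequality also forces $r > 1$, which makes $A_r$ positive-definite and $E_{A_r}$ a genuine ellipse lying inside the closed unit disk. The only real subtlety in the whole argument is in the first step, namely confirming that $\det(Q - A) = 0$ captures a genuine common-tangent touching rather than a transverse intersection; once this identification is in place the rest is routine linear algebra, with the strict containment $E_Q \subset \mathrm{int}(\text{unit disk})$ doing exactly the work needed to yield $r > 1$.
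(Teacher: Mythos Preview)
Your proof is correct and follows essentially the same route as the paper's: both reduce tangency at a point $x$ to the conditions $x^TQx=x^TA_rx=1$ together with parallel gradients $Qx\parallel A_rx$, deduce $Qx=A_rx$ by pairing with $x$, and then observe that after conjugating by $R_\phi$ the determinant condition is linear in~$r$ with nonzero leading coefficient because $E_Q$ lies strictly inside the unit circle. Your write-up is in fact a bit more complete than the paper's, since you explicitly derive $r=b-c^2/(a-1)>1$ from the L\"owner inequality $Q'-I>0$, which guarantees that $A_r$ is positive-definite and that $E_{A_r}$ sits inside the closed unit disk; the paper leaves this implicit. The ``subtlety'' you flag at the end is already handled by your own argument: once $(Q-A_r)x=0$ for some nonzero $x$, scaling gives a common point with coinciding normals, which is precisely tangency and not a transverse crossing.
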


\begin{proof}
The ellipse $E$ touches the unit circle, hence one of its  axis has length $1$, and the other has length $\rho$ which is yet to be determined. Since the ellipse
$E$ is at an angle $\phi$ relative to $x$-axis, $E$ is obviously induced by
$ R_{\phi }\left( \begin{smallmatrix}
 1 & 0 \\
 0 & r \\
\end{smallmatrix}
\right)
R_{\phi }^T $, where $r = 1/\rho^2$.
We need to determine $r$ so that $E_Q$ and $E$  touch at some vector $x$. Then,
 \begin{equation}\label{eq:qf1}
 x^T Qx=1= x^T R_{\phi } \left(
\begin{smallmatrix}
 1 & 0 \\
 0 & r \\
\end{smallmatrix}
\right)  R_{\phi }^Tx
\end{equation}
which implies
$$x^T \left(Q- R_{\phi }\left(
\begin{smallmatrix}
 1 & 0 \\
 0 & r \\
\end{smallmatrix}
\right) R_{\phi }^T \right)x=0.$$
Moreover, gradients of these two ellipses at the point where they touch are parallel.
Now, the gradient of a quadratic form $x\mapsto  x^TAx$ is given by
$$\partial  x^TAx=x^T (A^T+A ) $$
and since in our case the matrices $Q$ and $R_{\phi }\left(
\begin{smallmatrix}
	1 & 0 \\
	0 & r \\
\end{smallmatrix}
\right) R_{\phi }^T$ are symmetric,  we get that for some scalar $\lambda$, $$x^TQ=\lambda x^T  R_{\phi }\left(
\begin{array}{cc}
 1 & 0 \\
 0 & r \\
\end{array}
\right)  R_{\phi }^T.$$
Multiplying this by $x$ and using \eqref{eq:qf1}
we obtain $\lambda=1$, so
$x^T (Q-R_{\phi }\left(
\begin{smallmatrix}
 1 & 0 \\
 0 & r \\
\end{smallmatrix}
\right)  R_{\phi }^T)=0$. Thus,  $x$ is an eigenvector of the symmetric matrix  $Q-  R_{\phi } \left(
\begin{smallmatrix}
 1 & 0 \\
 0 & r \\
\end{smallmatrix}
\right)R_{\phi }^T$  corresponding to the eigenvalue $0$.
Therefore, we are looking for $r$ such that
$$\det\left( Q-R_{\phi }
\left(\begin{smallmatrix}
 1 & 0 \\
 0 & r \\
\end{smallmatrix}\right)
 R_{\phi }^T\right) =0.$$
	This is equivalent to
	\begin{equation}\label{st}
\det(R_\phi^T QR_{\phi} -\left(\begin{smallmatrix}
1 & 0 \\
0 & r \\
\end{smallmatrix}\right))=0.
	\end{equation}
	Since the ellipse $E_Q$ is inside the unit circle,  $x^T Qx>1$ for each unit vector $x$, so the $(1,1)$ entry of $R_\phi^T QR_{\phi} -\left(\begin{smallmatrix}
	1 & 0 \\
	0 & r \\
	\end{smallmatrix}\right)$ is nonzero. Thus, the left hand side of \eqref{st} is a linear polynomial in~$r$ and hence there does exist a unique $r$ satisfying \eqref{st}.
	
	Moreover, with such $r$, the matrix $ Q-R_{\phi }\left(\begin{smallmatrix}
	1 & 0 \\
	0 & r \\
	\end{smallmatrix}\right)
	R_{\phi }^T$ is singular but nonzero, and hence it has exactly one nonzero eigenvalue. Therefore, it is a semi-definite $2$-by-$2$ matrix,  and  it follows easily after its diagonalization that, modulo a scalar multiple, there exists at most one nonzero $x$ with $$x^T(Q-R_{\phi }\left(\begin{smallmatrix}
 1 & 0 \\
 0 & r \\
\end{smallmatrix}\right)
  R_{\phi }^T)x=0.$$
  It remains to prove that at least one such $x$ does exist. In fact, if $x$ is a unit vector in the  kernel of $Q-R_{\phi }\left(\begin{smallmatrix}
 1 & 0 \\
 0 & r \\
\end{smallmatrix}\right)
  R_{\phi }^T$ , then $Qx=R_{\phi }\left(\begin{smallmatrix}
 1 & 0 \\
 0 & r \\
\end{smallmatrix}\right)
  R_{\phi }^Tx$. We only need to scale $x$ to conclude that it 
  belongs to ellipses induced by $Q$ and induced by $R_{\phi }\left(\begin{smallmatrix}
 1 & 0 \\
 0 & r \\
\end{smallmatrix}\right)
  R_{\phi }^T$.
	
\end{proof}

\begin{remark}As a computational complement to Lemma \ref{Lemma_first_existence} let us note that
\begin{equation}\label{Rmatrix}
R_{\phi }
\left(\begin{matrix}
1 & 0 \\
0 & r \\
\end{matrix}\right)
R_{\phi }^T=
\left(\begin{matrix}
(r-1)\sin^2{\phi}+1 & -(r-1)\sin{\phi}\cos{\phi} \\
-(r-1)\sin{\phi}\cos{\phi} & (r-1)\cos^2{\phi}+1 \\
\end{matrix}\right).
\end{equation}
If $Q$ in Lemma \ref{Lemma_first_existence} is given by
$Q=\left(\begin{smallmatrix}
a & 0 \\
0 & b \\
\end{smallmatrix}\right)$
then
$$Q-R_{\phi }
\left(\begin{matrix}
1 & 0 \\
0 & r \\
\end{matrix}\right)
R_{\phi }^T = \left(\begin{matrix}
a-r\sin^2{\phi}-\cos^2{\phi} & (r-1)\sin{\phi}\cos{\phi} \\
(r-1)\sin{\phi}\cos{\phi} & b-\sin^2{\phi}-r\cos^2{\phi} \\
\end{matrix}\right),$$
so \eqref{eq:ellipse-equation} first implies
$$r-1=\frac{(a-1)(b-1)}{(a-1)\cos^2{\phi}+(b-1)\sin^2{\phi}}$$
and then
$$r=\frac{(a-1)b+a(b-1)\tan^2{\phi}}{(a-1)+(b-1)\tan^2{\phi}}.$$
For such $r$, the matrix \eqref{Rmatrix} becomes
\begin{eqnarray}\label{Rmatrixwithr}
\frac{1}{(a-1)+(b-1)\tan^2{\phi}}
\left(\begin{matrix}
	(a-1)+a(b-1)\tan^2{\phi} & -(a-1)(b-1)\tan{\phi} \\
	-(a-1)(b-1)\tan{\phi}  & (a-1)b+(b-1)\tan^2{\phi} \\
\end{matrix}\right).
\end{eqnarray}
\end{remark}

\medskip

From now on, $\Phi\colon\RR^2\to\RR^2$ is a surjection, which maps concentric ellipses into concentric ellipses, maps interior of ${\bf K}$ onto itself, and maps a point $(c,s)\in\RR^2$ on the unit circle  to the point
 $$\frac{(\mu c, \lambda s)}{\|(\mu c, \lambda s) \|}$$
again  on the unit circle,
 i.e., if $x=(c,s)\in{\bf K}$, then $\Phi(x)=Tx/\|Tx\|$ where $T=\diag(\mu,\lambda)$ is a diagonal matrix.
 Without loss of generality, due to polar decomposition, we may assume  $\lambda$ and $\mu$ are both positive.
 Within a sequence of steps we will show that $\lambda=\mu$ and that $\Phi$ fixes all the points from $\RR^2$. \bigskip

{Step 1}. The points $(\pm 1,0)$ and $(0,\pm 1)$ are fixed.

\noindent Proof. Obvious.\medskip

{Step 2}. Each concentric circle $K_ \rho $ of radius $ \rho <1$ is mapped into a standard ellipse.

\noindent Proof. Let us consider a concentric ellipse $E_1\colon \frac{1}{ \rho'^2}x^2+y^2=1$ which touches the unit circle at points $(0, \pm 1)$ and touches a concentric circle $K_{ \rho '}$ with radius $ \rho  '=\frac{ \rho }{\sqrt{2- \rho ^2}}< 1$.
Since points $(0, \pm 1)$ are fixed, the image of $E_1$ is an ellipse which touches the unit circle again at points $(0, \pm1)$, therefore its axes lie on  the coordinate axes. Similarly for a concentric ellipse $E_2\colon x^2+\frac{1}{ \rho '^2}y^2=1$ which touches the unit circle at points $(\pm 1, 0)$ and touches the concentric circle $K_{ \rho '}$.  Hence, the axes of ellipses $\Phi(E_1),\Phi(E_2)$ lie on  the coordinate axes and  both ellipses touch the unit circle.
Therefore, their intersection consists of four points, symmetrically placed relative to  the $x$ and $y$ axes.

Let us note  that  ellipses $E_1$ and $E_2$  intersect at points $\frac{1}{\sqrt{2}}(\pm \rho ,\pm \rho )$  on $K_ \rho $. Therefore, the image of $K_ \rho $ must be a concentric ellipse which contains the  four  symmetrical points of intersection of $\Phi(E_1)$ and $\Phi(E_2)$. The only such ellipse  again has axes lying on  the coordinate axes.\medskip

{Step 3}.
The points inside the unit circle  belonging to   $x$-axis (respectively, $y$-axis),  are mapped into $x$-axis (respectively, $y$-axis).

\noindent Proof. The concentric circle $K_{ \rho '}$ touches the above two  ellipses  $E_1$ and $E_2$ respectively at points $(\pm \rho ',0)$ and $(0,\pm \rho ')$. These four points are mapped into intersection of the image of $E_1$ and $E_2$ with the image of $K_{ \rho '}$. The images of $E_1$, $E_2$ and $K_{\rho'}$ are  standard ellipses, moreover, the images of $E_1,E_2$ also touch the unit circle at points $(0,\pm1)$ and $(\pm1,0)$, respectively. Hence, $\Phi(E_1)$ and $\Phi(K_{\rho'})$ intersect on $x$-axis, while $\Phi(E_2)$ and $\Phi(K_{\rho'})$ intersect on $y$-axis.

{Step 4}. Let us consider again two concentric ellipses $E_1,E_2$, which touch the unit circle at symmetrically placed points $(\cos\phi,\pm \sin\phi)$ relative to $x$-axis and touch a given concentric circle $K_{\rho }$ of radius $\rho= 1/\sqrt{r}$.
These two ellipses are induced by the matrices
$$
R_{\pm\phi }\left(
\begin{smallmatrix}
 1 & 0 \\
 0 &  r  \\
\end{smallmatrix}\right)R_{\pm\phi }^T=\left(\begin{matrix}
(r-1)\sin^2{(\pm \phi)}+1 & -(r-1)\sin{(\pm \phi)}\cos{(\pm \phi)} \\
-(r-1)\sin{(\pm \phi)}\cos{(\pm \phi)} & (r-1)\cos^2{(\pm \phi)}+1 \\
\end{matrix}\right).$$
The points $\pm(\cos\phi,\pm\sin\phi)$ of intersection of ${\bf K}$ with $E_1$ and $E_2$, respectively, are mapped by $\Phi$ into points
$$\pm\frac{(\mu \cos \phi ,\pm \lambda \sin \phi )}{\sqrt{\lambda^2 \sin ^2\phi +\mu^2 \cos ^2\phi }}$$
whose argument of polar coordinates  satisfies $\tan \psi=\pm \frac{\lambda \sin \phi }{\mu \cos \phi }=\pm \frac{\lambda }{\mu}\tan \phi$, that is, $$\psi =\pm \arctan\left(\frac{\lambda }{\mu}\tan \phi \right)=\pm\arctan(\gamma\tan\phi)$$ where $\gamma:=\lambda/\mu>0$.\medskip

    {Step 5}. Since $\Phi(E_1)$ and $\Phi(E_2)$ are at angle $\pm \psi$, respectively, relative to $x$-axis and $\tan{\psi}=\pm \gamma  \tan{\phi}$, it is sufficient to replace $\tan{\phi}$ in \eqref{Rmatrixwithr} with $\gamma \tan{\phi}$ to conclude that
    $\Phi(E_1)$ and $\Phi(E_2)$ are given by the matrices
$$\frac{1}{(a-1)+(b-1)\gamma^2\tan^2{\phi}}
    \left(\begin{matrix}
    (a-1)+a(b-1)\gamma^2\tan^2{\phi} & \mp (a-1)(b-1)\gamma\tan{\phi} \\
    \mp (a-1)(b-1)\gamma\tan{\phi}  & (a-1)b+(b-1)\gamma^2\tan^2{\phi} \\
    \end{matrix}\right).$$

{Step 6}. Let us now consider ellipses $E_3,E_4$ which touch ${\bf K}$ at points $(\sin\phi,\pm\cos\phi)$ and again touch $K_{\rho }$. By repeating the previous arguments (with $\pm\phi$ replaced by $\pm\phi+\frac{\pi}{2}$) one gets that $\psi$ changes to $\psi=\pm\arctan\frac{\lambda \cot \phi }{\mu}=\pm\arctan(\gamma \cot \phi)$ and therefore
it remains to replace $\tan{\phi}$ in \eqref{Rmatrixwithr} with $\gamma \cot{\phi}$ to get that
$\Phi(E_1)$ and $\Phi(E_2)$ are induced by the matrices
$$\frac{1}{(a-1)+(b-1)\gamma^2\cot^2{\phi}}
\left(\begin{matrix}
(a-1)+a(b-1)\gamma^2\cot^2{\phi} & \mp (a-1)(b-1)\gamma\cot{\phi} \\
\mp (a-1)(b-1)\gamma\cot{\phi}  & (a-1)b+(b-1)\gamma^2\cot^2{\phi} \\
\end{matrix}\right).$$

{Step 7}. The original ellipses intersect at vertical points
$$E_1\cap E_2\ni\left(0,\pm \frac{1}{\sqrt{r \cos ^2\phi +\sin ^2\phi}}
\right).$$

 \noindent  Proof. To see this, one merely inserts $x=0$ and $y=\pm \frac{1}{\sqrt{r \cos ^2\phi +\sin ^2\phi}}$ into the equations
   $$(x,y)R_{\phi}\left(\begin{smallmatrix}
                         1 & 0\\
                         0 & r
                        \end{smallmatrix}\right)R_{\phi}^T
(x,y)^T=1=(x,y)R_{-\phi}\left(\begin{smallmatrix}
                         1 & 0\\
                         0 & r
                        \end{smallmatrix}\right)R_{-\phi}^T(x,y)^T,$$
                        and notes that being quadratic equations, they have no additional solutions with $x=0$. If we want that the vertical intersection of  $E_1$ and $E_2$  lies on a concentric circle~$K_{\rho_0}$ with $\rho_0<1$ kept fixed, one computes that the angle satisfies
                        \begin{equation}\label{eq:phi}
                         \phi=\frac{1}{2} \arccos \left(\frac{2-\rho _0^2 (r+1)}{\rho _0^2 (r-1)}\right).
                        \end{equation}

%
%

{Step 8}. Since we know that $\Phi$ maps points on $y$-axis again into points on $y$-axis, and analogously for $x$-axis, we see that the above vertical point is mapped to
   the vertical point of the intersection of $\Phi(E_1)$ and $\Phi(E_2)$,  that is to the point
$$
\left(0,\, \pm\frac{\sqrt{(a-1)+(b-1) \gamma^2
   \tan ^2\phi }}{\sqrt{(a-1) b+(b-1) \gamma^2 \tan ^2\phi}}\right).$$\medskip

{Step 9}. Likewise,  the horizontal point of intersection $E_3\cap E_4\ni \left(\pm \frac{1}{\sqrt{r \cos ^2\phi +\sin ^2\phi}},0\right)$  is mapped to the horizontal point of intersection of  $\Phi(E_3)$ and $\Phi(E_4)$, that is, to  the point
$$\left(\pm\frac{\sqrt{(a-1)+(b-1) \gamma^2
   \cot ^2\phi }}{\sqrt{(a-1)+a (b-1) \gamma^2 \cot ^2\phi }}, \, 0\right).$$

 We now interrupt the narrative with the following technical lemma which the remaining steps rely upon.

       \begin{lemma}\label{lem:ellipse}
 Let $\gamma>0$ and let $
 \widehat{E_0}\colon a_0x^2+b_0y^2=1$ be an ellipse inside ${\bf K}$. For every given angle $\phi\in(0,\frac{\pi}{2})$ consider the uniquely given four ellipses $\widehat{E_1},\widehat{E_2},\widehat{E_3},\widehat{E_4}$, contained inside  ${\bf K}$, touching ${\bf K}$, and with $\widehat{E_1},\widehat{E_2}$ having longer axis  symmetrically at angles $\pm \arctan(\gamma\tan\phi)$  and $\widehat{E_3},\widehat{E_4}$ having longer axis   symmetrically at angles $\pm\arctan(\gamma\cot\phi)$, and  such that $\widehat{E_1}\cap \widehat{E_2}$  contains the intersection of $\widehat{E_0}$ with $y$-axis and $\widehat{E_3}\cap \widehat{E_4}$ contains the intersection of $\widehat{E_0}$ with $x$-axis.

 If for each $\phi\in(0,\frac{\pi}{2})$ there exists a standard ellipse, contained inside $\widehat{E_1},\widehat{E_2},\widehat{E_3},\widehat{E_4}$ and touching all four of them, then
 $\frac{a_0-1}{b_0-1}=\gamma^2$. 
\end{lemma}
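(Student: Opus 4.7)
The plan is to parametrize the hypothesized standard inside-touching ellipse as $\{(x,y):Ax^2+By^2=1\}$ with $A,B>1$, and then exploit the uniqueness clause in Lemma~\ref{Lemma_first_existence} to pin down the matrices of all four $\widehat{E_i}$. Applying that lemma with $E_Q=\{(x,y):Ax^2+By^2=1\}$, the ellipse at a prescribed angle simultaneously touching $\mathbf K$ and $E_Q$ is unique; hence each $\widehat{E_i}$ must coincide with this unique ellipse, whose matrix is obtained from formula~\eqref{Rmatrixwithr} by substituting $a\mapsto A$, $b\mapsto B$, and $\tan\phi\mapsto \gamma\tan\phi$ (for $\widehat{E_1},\widehat{E_2}$) or $\tan\phi\mapsto \gamma\cot\phi$ (for $\widehat{E_3},\widehat{E_4}$).

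Next I would impose the two passing-through constraints prescribed by $\widehat{E_0}$. Evaluating the quadratic form of $\widehat{E_1}$ at $(0,\pm 1/\sqrt{b_0})$ yields
\[
(A-1)(B-b_0)\;=\;(b_0-1)(B-1)\,\gamma^2\tan^2\phi,
\]
while evaluating the quadratic form of $\widehat{E_3}$ at $(\pm 1/\sqrt{a_0},0)$ yields
\[
(a_0-1)(A-1)\tan^2\phi\;=\;(A-a_0)(B-1)\,\gamma^2.
\]
These are the only two algebraic identities needed from the configuration.

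The decisive step is to specialize both to the single angle $\phi=\pi/4$, so that $\tan^2\phi=1$. Writing $u=A-1$, $v=B-1$, $p=a_0-1$, $q=b_0-1$, the equations become $u(v-q)=qv\gamma^2$ and $up=(u-p)v\gamma^2$. Solving each for $v$ and equating gives, after dividing through by $u$, the single relation $u(q\gamma^2-p)=0$. Since $A>1$ forces $u>0$, we obtain $q\gamma^2=p$, i.e.\ $\gamma^2=(a_0-1)/(b_0-1)$, as claimed. The main obstacle I anticipate is justifying the exclusion of the degenerate alternative $u=0$: if $A=1$ then either of the two equations above forces $B=1$ as well, so the candidate ellipse would coincide with $\mathbf K$ and could not lie inside any $\widehat{E_i}$; hence the single angle $\phi=\pi/4$ of the hypothesis already suffices to extract the claim.
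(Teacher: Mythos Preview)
Your proposal is correct and takes a genuinely different route from the paper. Both proofs reach the same pair of constraints (your two displayed identities are algebraically equivalent to the paper's $a_0-1=\frac{uv\gamma^2}{ut^2+\gamma^2 v}$ and $b_0-1=\frac{uv}{\gamma^2 vt^2+u}$), but the paper then solves for $v=v_t$ as a rational function of $t=\tan\phi$ and argues that since $v_t>0$ for every $t>0$ while its numerator $(a_0-1)(1-t^4)$ changes sign at $t=1$, the denominator $\tfrac{a_0-1}{b_0-1}-\gamma^2 t^2$ must vanish there too. You instead specialize immediately to $\phi=\pi/4$ and eliminate $v$ directly, obtaining $u(q\gamma^2-p)=0$ by pure algebra. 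Your argument is shorter and in fact establishes the sharper statement that the hypothesis at the single angle $\phi=\pi/4$ already forces $\tfrac{a_0-1}{b_0-1}=\gamma^2$; the paper's sign--change argument, by contrast, genuinely consumes the full range of $\phi$. What the paper's approach buys is the explicit formulas $a=a_0(1+t^2)-t^2$, $b=b_0(1+t^2)-t^2$ for the touching ellipse at general $\phi$, which are used downstream in Step~10.

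One small point of order: your exclusion of $u=0$ is slightly circular as written, since the two equations you invoke were derived via Lemma~\ref{Lemma_first_existence}, which already presupposes that the inner ellipse lies strictly inside $\mathbf K$ (i.e.\ $A,B>1$). The clean fix is to argue geometrically \emph{before} applying the lemma, exactly as the paper does: if $A=1$ then the standard ellipse passes through $(\pm1,0)\in\mathbf K$, while each $\widehat{E_i}$ touches $\mathbf K$ only at a slanted point, so $(\pm1,0)$ lies strictly outside $\widehat{E_i}$ and the standard ellipse cannot be contained in it. With $A,B>1$ secured up front, your use of Lemma~\ref{Lemma_first_existence} and the rest of the computation go through without change.
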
 \label{lem:parameters-of-ellipse}
 \begin{figure}[h!]
 \centering
   \includegraphics[width=3cm]{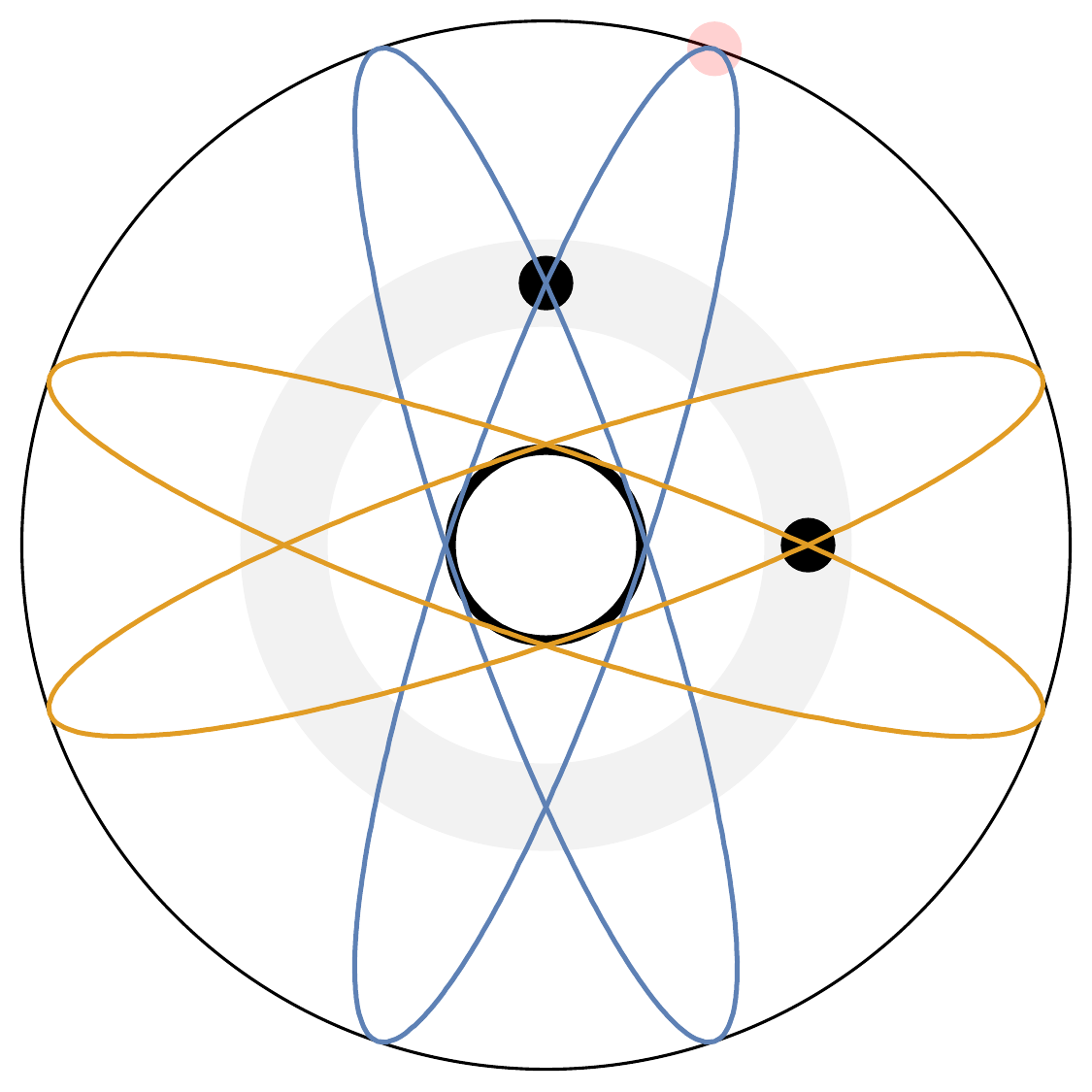}\hspace{2cm}\includegraphics[width=3cm]{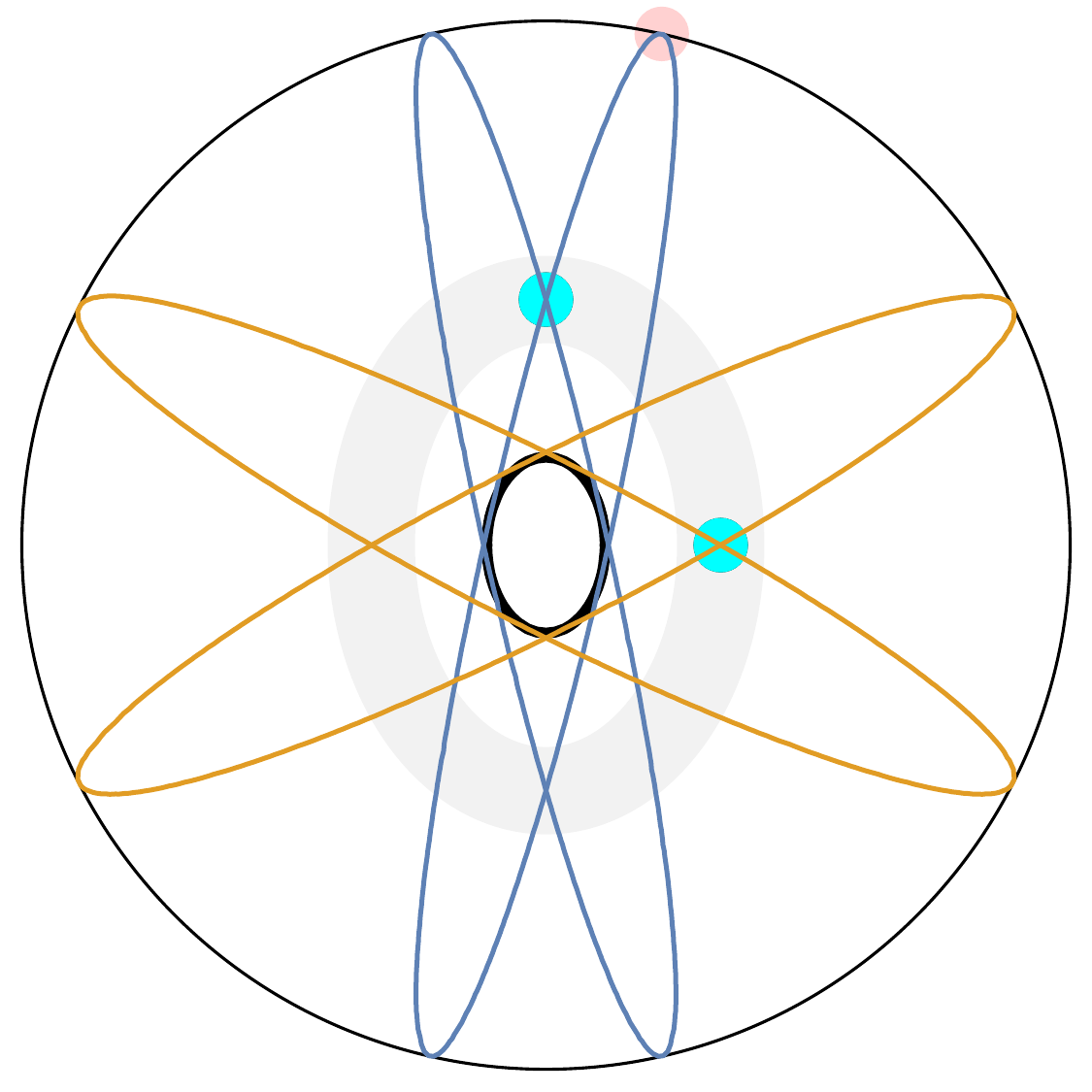}
   \caption{Four ellipses with axes symmetrically at an angle $\phi$ relative to $x$ and $y$, respectively,  touching the same inner circle as well as the unit circle, and intersecting vertically/horizontally at the same concentric circle drawn in thick gray. Their images on the right picture must intersect at the image of the gray thick circle, i.e., at the gray thick ellipse and must touch the image of the inner circle, i.e., an inner ellipse. }
 \end{figure}

 \begin{proof}
 Let the touching ellipse $\widehat{E}$ (for an angle $\phi$)  have an equation $\widehat{E}\colon ax^2+by^2=1$. Note that   $a=a(\phi)>1$ and $b=b(\phi)> 1$. Namely, if,  for some $\phi$, $a(\phi)=1$, then  the standard ellipse $\widehat{E}$ would touch ${\bf K}$ and hence would either have to intersect the slanted ellipses $\widehat{E}_i$ in four points or (if also $b(\phi)=1$) would equal ${\bf K}$ and hence would not lie inside $\widehat{E}_i$, a contradiction; similarly $b(\phi)=1$ is impossible. Set $$u=u(\phi)=a-1>0, \quad v=v(\phi)=b-1>0,\quad  t=\tan{\phi}.$$ As shown above, the four ellipses $\widehat{E_1},\widehat{E_2},\widehat{E_3},\widehat{E_4}$,  touching $\widehat{E}$, and touching the unit circle at symmetrically placed points with angles $\pm\arctan(\gamma\tan\phi)$ and $\pm\arctan(\gamma\cot\phi)$, respectively, intersect at vertical points
 $$\widehat{E_1}\cap\widehat{E_2}\cap \{(x=0)\} = \left(0,\pm\frac{\sqrt{(a-1)+(b-1) \gamma^2 \tan ^2\phi}}{\sqrt{(a-1) b+(b-1) \gamma^2 \tan
   ^2\phi}}\right)=\left(0,\pm\frac{\sqrt{u+v \gamma^2 t ^2}}{\sqrt{u (v+1)+ v\gamma^2 t  ^2}}\right)$$
   and at horizontal points
   $$\widehat{E_3}\cap\widehat{E_4}\cap \{(y=0)\} = \left(\pm\frac{\sqrt{(a-1)+(b-1) \gamma^2 \cot
   ^2\phi}}{\sqrt{{(a-1)+a (b-1) \gamma^2 \cot ^2\phi}}},0\right)=\left(\pm\frac{\sqrt{u+v \gamma^2 /t^2}}{\sqrt{{u+(u+1)v \gamma^2/t^2}}},0\right).$$
These points must belong to $\widehat{E_0}$, so that
$$\frac{\sqrt{u+v \gamma^2
   /t^2}}{\sqrt{{u+(u+1) v \gamma^2 /t^2}}}=\frac{1}{\sqrt{a_0}},\qquad \frac{\sqrt{u+v \gamma^2 t^2}}{\sqrt{u (v+1)+v \gamma^2 t^2}}=\frac{1}{\sqrt{b_0}}.$$
Then, after squaring and inverting, these two equations simplify to
$$a_0-1=\frac{uv\gamma^2}{ut^2+\gamma^2v}, \qquad b_0-1=\frac{uv}{\gamma^2vt^2+u}.$$
The first equation implies
$$u=u_t=\frac{(a_0-1)\gamma^2v}{\gamma^2v-(a_0-1)t^2},$$
which we insert in the second equation to get
$$v=v_t=\frac{\left(a_0-1\right) \left(1-t^4\right)}{\frac{a_0-1}{b_0-1}-\gamma ^2 t^2}.$$
Since $v=v_t=b-1> 0$ for each $t>0$ and the numerator of $v_t$ changes sign at $t=1$, the denominator must do the same, so at $t=1$ the denominator vanishes. This gives
$$\frac{a_0-1}{b_0-1}=\gamma^2$$
and the result follows. Let us remark for the future use  that with this  $\gamma$ we have
$v=(b_0-1)(1+t^2)$ and $u=(a_0-1)(1+t^2)$,
which implies that the parameters of a touching ellipse (with $t=\tan\phi$) are
\begin{equation}\label{eq:ab}
a=a_0(1+t^2)-t^2, \qquad b=b_0(1+t^2)-t^2=\frac{1}{\gamma^2}(a_0-1)(1+t^2)+1.\qedhere
\end{equation}
\end{proof}
We  now continue with the steps.\medskip


{Step 10}.  By Step~2, $\Phi$ maps the concentric circle $K_{1/2}$ of radius $\rho_0=1/2$ into a standard ellipse
$$\widehat{E}_0=\Phi(K_{1/2})\colon a_0 x^2+b_0y^2=1.$$
Let $\rho=1/\sqrt{r}\le1/2$ be the radius of a concentric circle $K_{\rho}$. By Step~7, if we place  ellipses $E_1,E_2,E_3,E_4$, each having a long axis of length $1$ and a short axis of length $\rho$,  symmetrically (relative to $x$-axis  and $y$-axis)  at an angle $\pm\phi$ (respectively, $\pi/2\pm\phi$ for $E_3,E_4$), with  $\phi$  given  by~\eqref{eq:phi}, then they will intersect vertically and   horizontally, respectively, on a fixed circle $K_{\rho_0}$, and moreover, all of them will touch ${\bf K}$ and will have  $K_{\rho}$ as a common incircle. The point $(\cos\phi,\sin\phi)$  where $E_1$ touches ${\bf K}$ is mapped to a point
$\Phi((\cos\phi,\sin\phi))=(\mu\cos\phi,\lambda \sin\phi)$ where  $\widehat{E}_1=\Phi(E_1)$ touches  $\Phi({\bf K})={\bf K}$.

By Lemma~\ref{lem:Phi(interior)=interior}, $\widehat{E}_1$ is contained in ${\bf K}$, and hence its longer axis is of length $1$ at the angle $\psi=\arctan(\tfrac{\lambda\sin\phi}{\mu\cos\phi})=\arctan(\gamma\tan\phi)$. Likewise we see that  $\widehat{E}_i=\Phi(E_i)$, $i=1,2,3,4$ satisfy all the assumptions on axis of Lemma~\ref{lem:ellipse}. To see that $\widehat{E}_1\cap \widehat{E}_2$  intersects $\widehat{E}_0=\Phi(K_{1/2})$ on $y$-axis we use Step~3 (and likewise that $\widehat{E}_3\cap \widehat{E}_4$  intersects $\widehat{E}_0$ on $x$-axis)  while   $\widehat{E}=\Phi(K_\rho)$ is their touching ellipse. Then, Lemma~\ref{lem:ellipse} implies that the parameters of $\widehat{E}_0=\Phi(K_{1/2})$  must satisfy  $(a_0-1)/(b_0-1)=\gamma^2$.

To see from here where a concentric circle of radius $\rho=1/\sqrt{r}\le\rho_0=1/2$ is mapped into, we insert into~\eqref{eq:ab} back again $t=\tan\phi$    (again, using~\eqref{eq:phi} with $\rho_0=1/2$ for $\phi$) and simplify using $\tan \left(\frac{a}{2}\right)=\sqrt{\frac{1-\cos (a)}{1+\cos (a)}}$
to get that the circle $K_{\rho}$ of radii $\rho=1/\sqrt{r}<1/2$ is mapped into an ellipse $ax^2+by^2=1$ where
\begin{equation}\label{eq:new-ab}
a= \frac{(a_0-1) (r-1)}{3} +1,\qquad b= \frac{\left(a_0-1\right) (r-1)}{3 \gamma ^2}+1.
\end{equation}

{Step 11}. Let $z=x+iy\in\CC=\RR^2$ be a point inside $K_{1/2}$. Then,
\begin{equation}\label{eq:Phi(x+iy)}
\Phi(x+i y)=\pm\frac{\sqrt{3} (x+i \gamma  y)}{\sqrt{\left(a_0-1\right) \left(1-x^2-y^2\right)+3 \left(x^2+\gamma ^2 y^2\right)}}.
\end{equation}

\noindent Proof. Let us draw an ellipse $E_{z}$  with axis on  a ray  $[0,z]$ and passing through $z$  at its extreme.  Now let us draw an incircle $K_{\mathrm{inc}}$ to this ellipse and an ellipse $E_{\mathrm{out}}$ with the same axis, the same incircle, but also touching  the unit circle. Moreover, let us draw a concentric circle $K_{|z|}$ passing through $z$. The ellipse $E_z$ is mapped into an ellipse touching $\Phi(E_{\mathrm{out}})$ and  $\Phi(K_{\mathrm{inc}})$  at the same point.
\begin{figure}[h!]
  \centering
  \includegraphics[width=3cm]{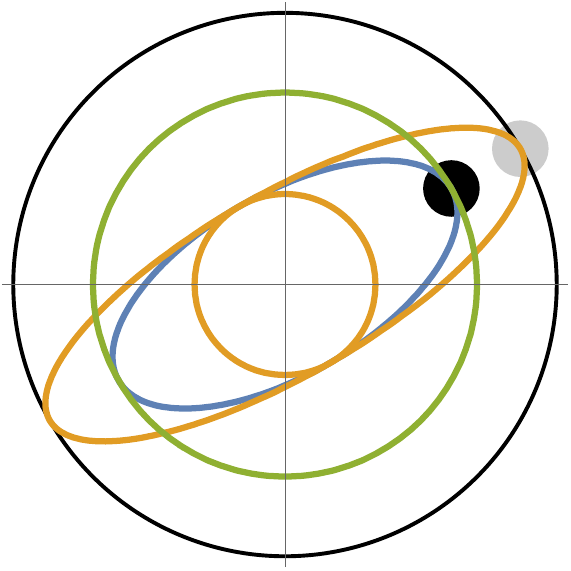}\hspace{3cm}\includegraphics[width=3cm]{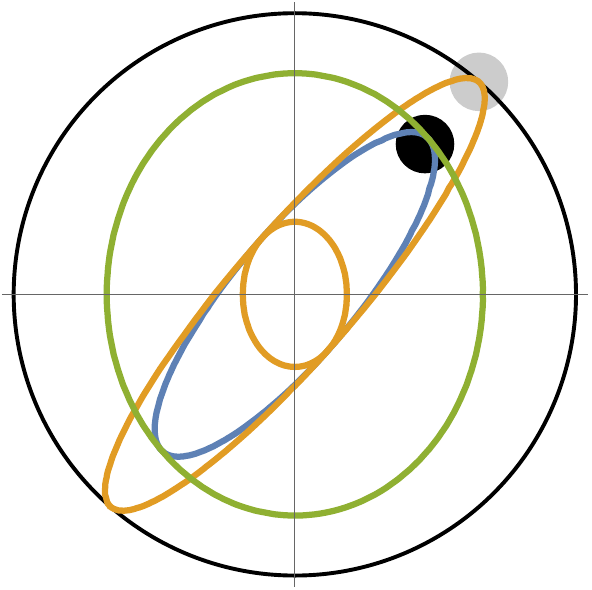}
  \caption{Towards the proof of Step 11. The inner-most circle can be arbitrarily small, so arbitrarily narrow ellipse  touches a unit circle and has axis on a ray passing through a given point. On the right are their $\Phi$-images.}\label{fig2}
\end{figure}

Now, we are free to  take the ellipse $E_z$ inside ${\bf K}$ as long as it touches the point $z$ at one of its axis. Its second axis can be arbitrarily small, in fact there exists a sequence of ellipses  $E_z$  converging to a line segment
$[-z,z]$;
these ellipses have arbitrarily small inscribed circle so the  corresponding sequence of outer   ellipses $E_{\mathrm{out}}$  touching the unit circle along the same axis and touching the inscribed circles, are also converging to a line segment.  The  radius $\rho_{\text{inc}}=1/\sqrt{r_{\text{inc}}}$ of inner inscribed circles converge to zero.  They are mapped into ellipses  with parameters described in \eqref{eq:new-ab} and clearly with $\rho_{\text{inc}}\to0$, i.e., with $r=r_{\text{inc}}\to\infty$, both parameters go to $\infty$  so that the ellipse $\Phi(K_{\rho })$  shrinks to a point. This means that $\Phi(E_{\text{out}})$
which touches the shrinking ellipses $\Phi(K_{\rho })$, must also shrink to a line segment.
 Thus,   $\Phi(E_z)$, which always lies inside  $\Phi(E_{\text{out}})$, also shrinks to a line segment and it always  touches $\Phi(K_{|z|})$.

 Notice that this line segment contains a point $\Phi(E_{\text{out}}\cap {\bf K})=\Phi(\tfrac{x+iy}{|x+iy|})=\frac{\mu x+i \lambda y}{|\mu x+i \lambda y|}$. This shows that $\Phi(z)$ is the intersection of $\Phi(K_{|z|)})$ and a line segment through $\pm (\mu x+i\lambda y) $.

From here, and using that $\Phi(K_{|z|})$ is an ellipse of the form $aX^2+bY^2=1$ with parameters $a,b$  given in \eqref{eq:new-ab} (with $r=\frac{1}{|z|^2}=\frac{1}{x^2+y^2}$ and $\gamma=\lambda/\mu$), one computes the asserted form:
\begin{eqnarray}
  \Phi(x+i y)&=& \pm\frac{\mu  x+i \lambda  y}{\sqrt{a \mu ^2 x^2+b \lambda ^2 y^2}}=\pm\frac{x+i \gamma  y}{\sqrt{a x^2+b \gamma ^2 y^2}}\notag\\
  &=&\pm  \frac{\sqrt{3} (x+i \gamma  y)}{\sqrt{\left(4-a_0\right) x^2+y^2 \left(1-a_0+3 \gamma ^2\right)+\left(a_0-1\right)}}\label{eq:domeins-Phi}
;\qquad |x+iy|\le \tfrac{1}{2},\\
&=&\pm\frac{\sqrt{3} (x+i \gamma  y)}{\sqrt{\left(a_0-1\right) \left(1-x^2-y^2\right)+3 \left(x^2+\gamma ^2 y^2\right)}};\qquad |x+iy|\le \tfrac{1}{2}.\notag
\end{eqnarray}

{Step 12}. The function on the right-hand side of~\eqref{eq:Phi(x+iy)}, which we denote by $\widehat{\Phi}$, is clearly defined for every $z=x+iy\in\CC$ inside the unit circle ${\bf K}$, and it coincides with $
\Phi$ inside  $K_{1/2}$. We claim that this function maps quadrics into quadrics.

\noindent Proof. To see this note that a general quadric takes the form  $a x^2 + b x y + c y^2 = 1$ for some parameters $a,b,c$.
One can solve it for $y$ and insert the $\widehat{\Phi}$-image of parametrization $(x,y(x))$   into a polynomial $AX^2+B XY+C Y^2$. By letting
\begin{equation}\label{eq:ABC}
 \begin{aligned}
 A&= \frac{1}{3} \left(a \left(a_0-1\right)-a_0+4\right),\\
B&= \frac{\left(a_0-1\right) b}{3 \gamma }, \\
C&=\frac{\left(a_0-1\right) (c-1)}{3 \gamma ^2}+1
\end{aligned}
\end{equation}
 the polynomial simplifies into a constant $1$, as claimed.\medskip

{Step 13}.  The function $\hat{\Phi}$ can identify two points only if they belong to the same line through the origin. Moreover,
\begin{equation}\label{eq:surject}
a_0\le 4\qquad \hbox{and}\qquad a_0\le 1+3\gamma^2.
\end{equation}
\noindent Proof. The first claim is a consequence of the fact that
$$\frac{\mathrm{Re}(\hat{\Phi}(x+iy))}{\mathrm{Im}(\hat{\Phi}(x+iy))}=\frac{x}{\gamma y}.$$
In particular, if a quadric $Q$ intersects the interior or $K_{1/2}$, then $\hat{\Phi}(Q\cap \mathrm{int} K_{1/2})$ contains infinitely many points, so by Step~12 belongs to a unique quadric.

To prove the second statement, let us consider first a point $T_\alpha=(\alpha,0)$ with $\alpha>1$. Each ellipse of the family 
$E_c\colon x^2/\alpha^2+c y^2=1$ ($c>4$)  contains $T_\alpha$ and passes through the interior of $K_{1/2}$. Hence, the ellipse $\Phi(E_c)$ is uniquely determined by its subset $\Phi(E_c\cap \mathrm{int}K_{1/2})=\hat{\Phi}(E_c\cap \mathrm{int}K_{1/2})$. In fact, it equals the quadric determined by coefficients \eqref{eq:ABC}, that is, the quadric
$$\frac{\left(\frac{1}{\alpha^2} \left(a_0-1\right)-a_0+4\right)}{3} x^2+\frac{\left(a_0-1\right) (c-1)+3\gamma^2}{3 \gamma ^2}y^2=1,$$
which contains a point $\Phi(\alpha,0)$ for each $c>1$. Letting  $c\to\infty$  we see that $\Phi(\alpha,0)=(x_\alpha,0)$  where $x_\alpha=\pm \frac{\sqrt{3}}{\sqrt{\frac{(a_0-1)}{\alpha^2} +4-a_0}}$. This is clearly defined for each $\alpha>1$ and letting $\alpha \to\infty$ we see that $4-a_0\ge 0$ (in order that the denominator in $x_\alpha$ is never imaginary).

Similarly, the point $T'_\beta=(0,\beta)$ (with $\beta>1$) belongs to each ellipse of the family  $E'_\beta\colon a x^2+y^2/\beta^2=1$, here with $a>4$. Arguing as before, $\Phi(E'_\beta)$ is an ellipse with the equation
$$\frac{\left(a \left(a_0-1\right)-a_0+4\right)}{3}  x^2+\frac{\left(a_0-1\right) (\frac{1}{\beta^2}-1)+3\gamma^2}{3 \gamma ^2}y^2=1$$
and it always contains $\Phi(0,\beta)$. For $a\to\infty$ we get $\Phi(0,\beta)= (0,y_\beta)$ with $y_\beta=\frac{\sqrt{3\gamma^2}}{\sqrt{\left(a_0-1\right) (\frac{1}{\beta^2}-1)+3\gamma^2}}$. This is defined for each $\beta>1$ and with $\beta\to\infty$ we must have $-(a_0-1)+3\gamma^2\ge 0$ in order the the denominator is never imaginary.

 {Step 14}. It follows from Step 13 and  \eqref{eq:domeins-Phi} that $\widehat{\Phi}(z)$ is defined for every $z\in\CC$.
 We claim that $\Phi(z)=\widehat{\Phi}(z)$ for every $|z|\ge 0$ and not just for $|z|\le 1/2$.

To see this, let us consider two distinct ellipses $E_z^{1}$ and $E_z^{2}$ both of which contain a point~$z$ and points from $\mathrm{int} K_{1/2}$. Then,
\begin{equation}\label{eq:Phi(z)}
                      \Phi(z)\in \Phi(E_z^{1})\cap \Phi(E_z^{2}).                                                                                                                                                                           \end{equation}
                  By the assumptions, $\Phi(E_z^{i})$ is an ellipse while $\widehat{\Phi}(E_z^{i})$ is a quadric, which coincides with the ellipse $\Phi(E_z^{i})$ on infinitely  many points (the~$\hat{\Phi}$-images of $E_z^{i}\cap \mathrm{int} K_{1/2})$.  It follows that  $\widehat{\Phi}(E_z^{i})=\Phi(E_z^{i})$, and the claim follows  from~\eqref{eq:Phi(z)}.\medskip

{Step 15}.
Since   $\Phi=\widehat{\Phi}$ is surjective, for every $u+iv$ there exists $x+iy$ with $\hat{\Phi}(x+iy)=u+iv$ (recall that the action of $\hat{\Phi}$ is described in~\eqref{eq:domeins-Phi}). Thus, we need to solve a system of two equations with real variables $x,y$. By squaring both equations we get a system
\begin{equation*}
 \begin{aligned}
  \frac{3 x^2}{\left(4-a_0\right) x^2+y^2 \left(1-a_0+3 \gamma ^2\right)+a_0-1}&=u^2,\\
  \frac{3 \gamma ^2 y^2}{\left(4-a_0\right) x^2+y^2 \left(1-a_0+3 \gamma ^2\right)+a_0-1}&=v^2
 \end{aligned}
\end{equation*}
with solution
\begin{equation*}
 \begin{aligned}
x^2&= \frac{\left(a_0-1\right) \gamma ^2 u^2}{\left(a_0-4\right) \gamma ^2 u^2+v^2 \left(a_0-1-3 \gamma ^2\right)+3 \gamma ^2},\\
y^2&= \frac{\left(a_0-1\right) v^2}{\left(a_0-4\right) \gamma ^2 u^2+v^2 \left(a_0-1-3 \gamma ^2\right)+3 \gamma ^2}.
 \end{aligned}
\end{equation*}
Since $a_0>1$, the numerator is in both cases nonnegative, so denominator must be nonnegative as well. Taking $v=0$ and $u\to\infty$ we see that $a_0-4\ge0$, that is, $a_0\ge 4$. Taking $u=0$ and $v\to\infty$ we get $a_0-1-3\gamma^2\ge 0$. So $\Phi$ is surjective if and only if
$$a_0\ge 4\qquad \hbox{and}\qquad a_0\ge 1+3\gamma^2. $$
Combining this with \eqref{eq:surject} we deduce
$$a_0=4\quad\hbox{ and } \quad \gamma=1.$$
Thus, $T=\diag(\mu,\lambda)=\diag(\mu,\gamma\mu)$ is a scalar matrix. Moreover,  from the  equation  for $\Phi$, we see that $\Phi(x+iy)=x+iy$ is the identity.\qed

\section{Preservers of the minus partial order}

Let $A,B\in M_{n}(\mathbb{F)}$. It is known (see, e.g. \cite[page 149]%
{Legisa}) that
\begin{equation}
A\leq^{-}B\quad\text{if and only if}\quad\func{Im}B=\func{Im}A\oplus\func{Im}%
(B-A)\quad\text{if and only if}\quad RAL\leq^{-}RBL  \label{eq_minus}
\end{equation}
for any invertible $R,L\in M_{n}(\mathbb{F)}$. Let $A,B\in M_{n}(\mathbb{C)}$%
. If there exists an invertible matrix $S\in M_{n}(\mathbb{C)}$ such that

\begin{itemize}
\item[a)] $B=$ $SAS^{T}$, then we say that $A$ and $B$ are congruent;

\item[b)] $B=$ $SAS^{\ast}$, then we say that $A$ and $B$ are *congruent.
\end{itemize}

By Sylvester's law of inertia (see \cite[page 282]{Horn}) two (Hermitian)
matrices $A,B\in H_{n}(\mathbb{C)}$ are *congruent if and only if they have
the same inertia, i.e., they have the same number of positive eigenvalues
and the same number of negative eigenvalues. Two (real symmetric) matrices $%
A,B\in H_{n}(\mathbb{R)}$ are *congruent via a complex matrix if and only if
they are congruent via a real matrix \cite[page 283]{Horn}. So, Sylvester's
law for the real case states that $A,B\in H_{n}(\mathbb{R)}$ are congruent
via an invertible $S\in M_{n}(\mathbb{R)}$ (i.e., $B=$ $SAS^{T}$) if and
only if $A$ and $B$ have the same number of positive eigenvalues and the
same number of negative eigenvalues. Note that congruent (respectively,
*congruent) matrices have the same rank \cite[page 281]{Horn}.

Denote by $E_{ij}$ the $n\times n$ matrix with all entries equal to zero
except the $(i,j)$-entry which is equal to one. Let $E_{k}=E_{11}+E_{22}+%
\ldots +E_{kk}$. For $A,B\in M_{n}(\mathbb{R)}$ we will write $A<^{-}B$ when
$A\leq ^{-}B$ and $A\neq B$. We now state and prove our main result.

\begin{theorem}
Let $n\geq 3$ be an integer. Then $\Phi :H_{n}^{+}(\mathbb{R})\rightarrow
H_{n}^{+}(\mathbb{R})$ is a surjective bi-monotone map with respect to the
minus partial order $\leq ^{-}$ if and only if there exists an invertible matrix $%
S\in M_{n}(\mathbb{R)}$ such that
\begin{equation*}
\Phi (A)=SAS^{T}
\end{equation*}%
for every $A\in H_{n}^{+}(\mathbb{R})$.
\end{theorem}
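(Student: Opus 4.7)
The ``if'' direction follows from~\eqref{eq_minus}: the congruence $A\mapsto SAS^T$ is bijective with inverse $A\mapsto S^{-1}A(S^{-1})^T$ and preserves $\leq^-$ in both directions. For the converse, the plan is to compose $\Phi$ with a sequence of congruences until it becomes the identity, which forces the original $\Phi$ to be of the required form. The argument proceeds in three stages: rank and frame normalization, reduction to a planar incidence problem, and planar analysis via Section~3.

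\emph{Stage 1.} Since rank equals the length of a maximal chain of distinct elements below $A$ in $\leq^-$, bi-monotonicity forces $\Phi$ to preserve rank; in particular $\Phi(0)=0$ and $\Phi$ bijectively maps rank-$1$ operators to rank-$1$ operators. The rank identity $\operatorname{rank}\Phi(\sum_{k\in S}e_k\otimes e_k^*)=|S|$ combined with Lemma~\ref{lem:2.2} forces the $\Phi$-images $f_k\otimes f_k^*$ of $e_k\otimes e_k^*$ to be supported on a linearly independent family $f_1,\dots,f_n$ spanning $\RR^n$. Composing $\Phi$ with the congruence $A\mapsto S^{-1}A(S^{-1})^T$, where $Se_k=f_k$, we may assume $\Phi(e_k\otimes e_k^*)=e_k\otimes e_k^*$ for every $k$.

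\emph{Stage 2.} For each pair $i\ne j$, set $V_{ij}=\operatorname{span}(e_i,e_j)$. By Remark~\ref{rem:geometric}, rank-$2$ operators in $H_n^+(\RR)$ with image $V_{ij}$ biject with ellipses centered at the origin in $V_{ij}$, and by Lemma~\ref{lem:2.2} the rank-$1$ operators minus-below such an operator correspond (modulo sign) to points on that ellipse. Lemma~\ref{lem:3.1} shows that the ``same image $2$-plane'' relation on rank-$2$ operators is expressible from the rank-$1$--rank-$2$ incidence that $\Phi$ preserves; since $\Phi(e_i\otimes e_i^*+e_j\otimes e_j^*)$ has rank $2$ and its image contains both $e_i$ and $e_j$, this image equals $V_{ij}$, and $\Phi$ therefore sends the family of operators with image in $V_{ij}$ to itself. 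The induced planar map $\tilde\Phi_{ij}\colon V_{ij}\to V_{ij}$ (defined up to sign) carries concentric ellipses to concentric ellipses and fixes $\pm e_i,\pm e_j$.

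\emph{Stage 3.} After a further congruence supported on $V_{ij}$ that sends $\Phi(e_i\otimes e_i^*+e_j\otimes e_j^*)$ back to the projector onto $V_{ij}$, the induced map $\tilde\Phi_{ij}$ takes the unit circle of $V_{ij}$ to itself via $x\mapsto Tx/\|Tx\|$ for some invertible diagonal $T=\diag(\mu,\lambda)$ and preserves the interior of the unit disk. Lemma~\ref{lem:Phi(interior)=interior} then applies, and the sequence of Steps~1--15 of Section~3 forces $T$ to be scalar and $\tilde\Phi_{ij}=\operatorname{id}$. Hence $\Phi$ fixes every rank-$1$ operator with image in $V_{ij}$. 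Iterating the analysis relative to other maximal orthogonal frames built from vectors already known to be fixed, $\Phi$ fixes every rank-$1$ operator of $H_n^+(\RR)$. For $A$ of rank $\ge 2$, Remark~\ref{rem:geometric} says the set of rank-$1$ operators minus-below $A$ determines $A$ uniquely via the ellipsoid $\{x\in\operatorname{Im}A:x^*A^\dagger x=1\}$, so $\Phi(A)=A$. Unwinding the normalizations, the original $\Phi$ is a congruence.

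The principal obstacle I foresee is the planar-stage normalization: the corrective congruence used in Stage~3 on $V_{ij}$ must be chosen compatibly across the coordinate $2$-planes $V_{ik}$ that share $e_i$, since it will generally disturb the rank-$1$ normalization of Stage~1 on $e_i$, and so the pair-by-pair bookkeeping and iterative extension to non-coordinate frames must be performed with care. Moreover, the sign ambiguity in the correspondence $x\otimes x^*\leftrightarrow\pm x$ between rank-$1$ operators and planar points must be resolved consistently when translating back and forth between the operator statement and the planar conclusions of Section~3.
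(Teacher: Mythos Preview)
Your proposal has a genuine gap in Stage~3. You assert that, after a planar congruence sending $\Phi(e_i\otimes e_i^*+e_j\otimes e_j^*)$ to $P_{V_{ij}}$, the induced map $\tilde\Phi_{ij}$ restricts to the unit circle as $x\mapsto Tx/\|Tx\|$ for some \emph{diagonal} $T$. But nothing you have proved implies this. Your Stage~1 normalization only tells you that the four points $\pm e_i,\pm e_j$ are fixed \emph{before} the Stage~3 congruence; after that congruence (by $Q^{-1/2}$, where $Q$ is the $2\times2$ block of $\Phi(P_{V_{ij}})$) those points are generally moved, and in any case you have no information whatsoever about how $\Phi$ acts on the other rank-one projectors below $P_{V_{ij}}$. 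Section~3 is not a black box that takes an arbitrary bijection on concentric ellipses and outputs ``identity'': its entire analysis is built on the specific hypothesis that the unit-circle action is $x\mapsto Tx/\|Tx\|$ with $T$ diagonal, and Lemma~\ref{lem:Phi(interior)=interior} already requires this form. So the planar analysis simply does not start.

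The paper closes this gap by a global argument you have bypassed. One first normalizes $\Phi(I)=I$; then $P\leq^- I$ if and only if $P$ is idempotent, so $\Phi$ restricts to a bijection of $P_n(\RR)$ and hence induces a lattice automorphism of the subspace lattice of $\RR^n$. For $n\geq 3$ (this is where the dimension hypothesis enters, which your outline never uses), Mackey's theorem forces this automorphism to come from a single invertible linear map $T:\RR^n\to\RR^n$, i.e.\ $\Phi(P)=P_{T(\func{Im}P)}$ for every projector $P$. An SVD normalization $T=UDV$ then makes $T$ diagonal, and now the action of $\Phi$ on \emph{every} rank-one projector is $x\otimes x^*/\|x\|^2\mapsto Dx\otimes(Dx)^*/\|Dx\|^2$, which is exactly the diagonal unit-circle action Section~3 needs. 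This global step also dissolves the pairwise-compatibility problem you flag at the end: there is a single congruence, not one per coordinate plane.
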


\begin{proof}
Let $\Phi :H_{n}^{+}(\mathbb{R})\rightarrow H_{n}^{+}(\mathbb{R})$ be of the
form $\Phi (A)=SAS^{T}$, $A\in H_{n}^{+}(\mathbb{R})$, where $S\in M_{n}(%
\mathbb{R)}$ is an invertible matrix. Then $\Phi $ preserves by (\ref%
{eq_minus}) the order $\leq ^{-}$ in both directions and is clearly
surjective.

Conversely, let $\Phi :H_{n}^{+}(\mathbb{R})\rightarrow H_{n}^{+}(\mathbb{R}%
) $ be a surjective map that preserves the order $\leq ^{-}$ in both
directions. We will split the proof into several steps.

Step 1. $\Phi $ \textit{is bijective. }Let $\Phi (A)=\Phi (B)$ for $A,B\in
H_{n}^{+}(\mathbb{R})$. The order $\leq ^{-}$ is reflexive so $\Phi (A)\leq
^{-}\Phi (B)$ and $\Phi (B)\leq ^{-}\Phi (A)$. Since $\Phi $ preserves the
order $\leq ^{-}$ in both directions, we have $A\leq ^{-}B$ and $B\leq ^{-}A$%
. It follows that $A=B$, since $\leq ^{-}$ is antisymmetric. Thus, $\Phi $
is injective and therefore bijective.

Step 2. $\Phi (0)=0$. Note that $0\leq ^{-}A$ for every $A\in H_{n}^{+}(%
\mathbb{R})$. So, on the one hand $0\leq ^{-}\Phi (0)$ and on the other
hand, since $\Phi ^{-1}$ has the same properties as $\Phi $, $0\leq ^{-}\Phi
^{-1}(0)$ and thus $\Phi (0)\leq ^{-}0$.

Step 3. $\Phi $ \textit{preserves the rank, i.e., }$\limfunc{rank}(A)=%
\limfunc{rank}(\Phi (A))$ \textit{for every} $A\in H_{n}^{+}(\mathbb{R})$.
Let $A\in H_{n}^{+}(\mathbb{R})$ with $\limfunc{rank}(A)=k$. By Sylvester's
law of inertia there exists an invertible matrix $R\in M_{n}(\mathbb{R)}$
such that $E_{k}=RAR^{T}$. Clearly (see (\ref{eq_rank_minus})),
\begin{equation*}
0<^{-}E_{1}<^{-}E_{2}<^{-}\ldots <^{-}E_{n}=I.
\end{equation*}%
Since congruence preserves rank, we have by (\ref{eq_minus})
\begin{equation*}
0<^{-}R^{-1}E_{1}(R^{-1})^{T}<^{-}R^{-1}E_{2}(R^{-1})^{T}<^{-}\ldots
<^{-}R^{-1}E_{k}(R^{-1})^{T}<^{-}\ldots <^{-}R^{-1}E_{n}(R^{-1})^{T}.
\end{equation*}%
From $(R^{-1})^{T}=(R^{T})^{-1}$ and since $\Phi $ preserves the order $\leq
^{-}$ and it is injective, we obtain
\begin{equation}
0<^{-}\Phi (R^{-1}E_{1}(R^{T})^{-1})<^{-}\Phi
(R^{-1}E_{2}(R^{T})^{-1})<^{-}\ldots <\Phi (A)<^{-}\ldots <^{-}\Phi
(R^{-1}(R^{T})^{-1}).  \label{rank_phi_minus}
\end{equation}

Let $C,D\in M_{n}(\mathbb{R)}$ with $C<^{-}D$ and $\limfunc{rank}(C)=%
\limfunc{rank}(D)$. Then by (\ref{eq_rank_minus}), $\limfunc{rank}(D-C)=0$
and therefore $D=C$, a contradiction. So, if $C<^{-}D$, then $\limfunc{rank}%
(C)<\limfunc{rank}(D)$.

Every succeeding matrix in (\ref{rank_phi_minus}) has the rank that is
strictly greater then its predecessor. Since $\limfunc{rank}\Phi
(R^{-1}(R^{T})^{-1})\leq n$, it follows that $\limfunc{rank}\Phi
(R^{-1}(R^{T})^{-1})=n$ and therefore $\limfunc{rank}(\Phi (A))=k$.

Step 4. \textit{We may without loss of generality assume that} $\Phi (I)=I$.
By the previous step, $\Phi (I)=B$ where $B\in H_{n}^{+}(\mathbb{R})$ is an
invertible (positive definite) matrix. It follows that there exists a
positive definite matrix $\sqrt{B}\in H_{n}^{+}(\mathbb{R})$ such that $\Phi
(I)=\sqrt{B}\sqrt{B}$. Let $\Psi :$ $H_{n}^{+}(\mathbb{R})\rightarrow
H_{n}^{+}(\mathbb{R})$ be defined by
\begin{equation*}
\Psi (A)=\left( \sqrt{B}\right) ^{-1}\Phi (A)\left( \sqrt{B}\right) ^{-1}.
\end{equation*}%
Then $\Psi $ is a bijective map that preserves the order $\leq ^{-}$ in both
directions. Also, $\Psi (I)=I$. We will thus from now on assume that
\begin{equation*}
\Phi (I)=I.
\end{equation*}

Step 5. \textit{There exists a bijective, linear map }$T:\mathbb{R}%
^{n}\rightarrow \mathbb{R}^{n}$ \textit{such that for every }$P\in P_{n}(%
\mathbb{R})$ \textit{the matrix }$\Phi (P)$ \textit{is the projector onto }$T(\func{Im}P)$\textit{, i.e., }%
\begin{equation*}
\mathit{\ }\Phi (P)=P_{T(\func{Im}P)}.
\end{equation*}

Let $P\in M_{n}(\mathbb{R})$ be an idempotent matrix, i.e., $P^{2}=P$. Then $%
\mathbb{R}^{n}=\func{Im}P\oplus \limfunc{Ker}P=\func{Im}P\oplus \mathrm{%
\func{Im}}(I-P)$ and therefore by (\ref{eq_minus}), $P\leq ^{-}I$. Moreover,
if $Q\in M_{n}(\mathbb{R)}$ is an idempotent matrix and if $A\leq ^{-}Q$ for
$A\in M_{n}(\mathbb{R)}$, then by, e.g. \cite[Lemma 2.9]%
{MarovtRakicDjodjevic}, $A^{2}=A$. Thus for $P\in M_{n}(\mathbb{R})$ we have
\begin{equation*}
P\leq ^{-}I\quad \text{if and only if}\quad P^{2}=P.
\end{equation*}%
Let now $P\in $ $P_{n}(\mathbb{R})$, i.e., $P$ is a symmetric and idempotent
matrix. It follows that $P\leq ^{-}I$ and therefore $\Phi (P)\leq ^{-}\Phi
(I)=I$. So, $\Phi (P)$ is an idempotent matrix and by the definition of the
map $\Phi $ also symmetric, i.e., $\Phi (P)\in $ $P_{n}(\mathbb{R})$. Since $%
\Phi ^{-1}$ has the same properties as $\Phi $, we may conclude that
\begin{equation*}
P\in P_{n}(\mathbb{R})\quad \text{if and only if}\quad \Phi (P)\in P_{n}(%
\mathbb{R}),
\end{equation*}%
i.e., $\Phi $ preserves the set of all projectors.
Recall that we may identify subspaces of $\mathbb{R}^{n}$ with elements of $%
P_{n}(\mathbb{R})$. Let $\mathcal{C}(\mathbb{R}^{n})$ be the lattice of all
subspaces of $\mathbb{R}^{n}$. It follows that the map $\Phi $ induces a
lattice automorphisms, i.e., a bijective map $\tau :\mathcal{C}(\mathbb{R}%
^{n})\rightarrow \mathcal{C}(\mathbb{R}^{n})$ such that
\begin{equation*}
M\subseteq N\quad \text{if and only if}\quad \tau (M)\subseteq \tau (N)
\end{equation*}%
for all $M,N\in \mathcal{C}(\mathbb{R}^{n})$. In \cite[page 246]{Mackey}
(see also \cite[pages 820 and 823]{FillmoreLongstaff} or \cite[page 82]%
{Pankov}) Mackey proved that for $n\geq 3$ every such a map is induced by an
invertible linear operator, i.e., there exists an invertible linear operator
$T:$ $\mathbb{R}^{n}\rightarrow \mathbb{R}^{n}$ such that $\tau (M)=T(M)$
for every $M\in $ $\mathcal{C}(\mathbb{R}^{n})$. For the map $\Phi $ it
follows that
\begin{equation}
\Phi (P)=P_{T(\func{Im}P)}  \label{eq_projectors}
\end{equation}%
for every $P=P_{\func{Im}P}\in P_{n}(\mathbb{R})$.

Step 6. \textit{There exist orthogonal matrices $U,V \in M_{n}(\mathbb{R)}$, such that $U^{T} \Phi (V^{T} A V) U = A$}
\textit{for every rank-one} $A\in H_{n}^{+}(\mathbb{R})$.
Note that for $T$ from Step 5 there exists a singular value decomposition $T= UDV$, with $U, V$ orthogonal
and $D = \mathrm{diag\, } ( \lambda_1, \lambda_2, \ldots, \lambda_n )$ being
a diagonal matrix with $\lambda_1 = \max \lambda_i$ and $\lambda_2 = \min \lambda_i$. By replacing the map $\Phi$ with the map $X\mapsto U^{T} \Phi (V^{T} X V) U$, $X \in M_{n}(\mathbb{R)}$, we can assume that $\Phi(I) = I$ and $\Phi(x \otimes
x^{*}/\|x\|^2) = (Dx) \otimes (Dx)^{*} /\| Dx\|^2$ for every nonzero $x \in \mathbb{R}^n$. In particular, $\Phi(E_{ii}) = E_{ii}$.
By \eqref{eq_projectors} and since $T$ is now diagonal we have $\Phi(I_2 \oplus 0_{n-2}) = I_2 \oplus 0_{n-2}$.

Consider a rank-two matrix $A \in H_{2}^{+}(\mathbb{R}) \oplus 0_{n-2}$. Note that the set $\Xi_A = \{x \otimes x^{*} \leq ^{-} A\}$ is mapped by $\Phi$ onto
$\Xi_{\Phi(A)}$ since $\Phi$ is bi-monotone. By Lemma~\ref{lem:2.2}, the set $\Xi_A$ is in a bijective correspondence with the set of unordered pairs of antipodal points on a concentric planar ellipse $\mathfrak{E}_{A}=\left\{ x\in \func{Im}A:x^{\ast }A^{\dagger }x=1\right\} \subseteq \RR^2 \oplus 0_{n-2}$, see \eqref{eq:ellipse}.
By Step 3, $\Phi(A)$ is also a rank-two matrix. Hence $\Phi$ induces a map, which we still denote by $\Phi$, that maps concentric ellipses from $\RR^2$ into concentric planar ellipses. By Lemma~\ref{lem:3.1}, they all lie in the same plane, which is equal to $\Image \Phi(I_2 \oplus 0_{n-2}) = \Image I_2 \oplus 0_{n-2} = \RR^2 \oplus 0_{n-2}$.
Therefore the induced $\Phi$ bijectively maps the points from $\RR^2$ onto $\RR^2$, modulo identifying pairs of points symmetric to the origin, and maps concentric ellipses onto concentric ellipses. Notice also that $\mathfrak{E}_{I_2 \oplus 0_{n-2}}$ is a unit circle by Lemma~\ref{lem:2.2}. Since rank-one projectors which are below $I_2 \oplus 0_{n-2}$ are identified with (antipodal) points on the unit circle, we see that $\Phi(\mathfrak{E}_{I_2 \oplus 0_{n-2}} ) = \mathfrak{E}_{I_2 \oplus 0_{n-2}}$. Also, a point $(c,s)$ on the unit circle corresponds to a projector $(c,s,0,\ldots,0)\otimes(c,s,0,\ldots,0)^{\ast}$ which is mapped to a projector
\begin{equation*}
\begin{aligned}
\Phi((c,s,0,\ldots,0)\otimes(c,s,0,\ldots,0)^{\ast})&=\frac{D(ce_1+se_2)\otimes(D(ce_1+se_2))^{\ast}}{\|D(ce_1+se_2)\|^2}\\
&=\frac{(\lambda_1 ce_1+\lambda_2 se_2)\otimes(\lambda_1 ce_1+\lambda_2 se_2)^{\ast}}{\|\lambda_1 ce_1+\lambda_2 se_2\|^2}
\end{aligned}
\end{equation*}
that corresponds to a point $\frac{(\lambda_1 c,\lambda_2 s)}{\|(\lambda_1 c,\lambda_2 s)\|}$ also on the unit circle. Hence induced $\Phi$ satisfies all the assumptions of
Section 3 and therefore $\lambda_1=\lambda_2$ and thus $D$ is a scalar matrix. So, $\Phi$ fixes all projectors. Moreover, by Section 3 the induced map $\Phi$ is an identity on $\RR^2$ so the map $\Phi$ fixes all rank-one matrices which are below $I_2 \oplus 0_{n-2}$. Now we consider  an arbitrary rank-one matrix $R\in H_{n}^{+}(\mathbb{R})$ and choose any rank-two matrix $B\in H_{n}^{+}(\mathbb{R})$ that majorizes $R$ with respect to the minus partial order. There exists an orthogonal matrix $W\in M_{n}(\mathbb{R})$ such that $\Image (WBW^{T})=\RR^2\oplus 0_{n-2}$.  Note that $\Image B =\Image P_{\Image{B}}$ where $P_{\Image{B}}$ is the projector onto the image of $B$, so again by Lemma~\ref{lem:3.1},  $\Image\Phi(B)=\Image\Phi(P_{\Image{B}})$. Since we know that $\Phi$ fixes all the projectors, we see that $\Image\Phi(B)=\Image B$. By repeating the above arguments on the map $X\mapsto W\Phi(W^{T}XW)W^{T}$ which also fixes all projectors and maps $\RR^2\oplus 0_{n-2}$ to $\RR^2\oplus 0_{n-2}$, we see that $\Phi$ also fixes  the rank-one matrix $R$.


Step 7. \textit{Conclusion of the proof. } Let $A\in H_{n}^{+}(\mathbb{R})$
be of rank-one. By Step 6, $\Phi (A)=A$. It
follows immediately by Remark \ref{rem:geometric} that then $\Phi (A)=A$ for
every $A\in H_{n}^{+}(\mathbb{R})$. Taking into account all the assumptions
we may conclude that $\Phi :H_{n}^{+}(\mathbb{R})\rightarrow H_{n}^{+}(\mathbb{R})$ is a surjective bi-monotone map with respect to the minus partial order $\leq ^{-}$ if and only if there exists an invertible matrix $S\in M_{n}(\mathbb{R)}$ such that
\begin{equation*}
\Phi (A)=SAS^{T}
\end{equation*}for every $A\in H_{n}^{+}(\mathbb{R})$.
\end{proof}

\end{document}